\pgfplotsset{compat=newest}
\newcommand{\m}[1]{\mathcal{#1}}
\renewcommand{\b}[1]{\boldsymbol{#1}}
\renewcommand{\t}[1]{\text{#1}}
\newtheorem{proposition}{Proposition}
\newtheorem{remark}{Remark}
\begin{document}

\title{Spatiotemporal Pricing and Fleet Management of Autonomous Mobility-on-Demand Networks: A Decomposition and Dynamic Programming Approach with Bounded Optimality Gap}

\author{\vspace{0.3cm} Zhijie Lai and Sen Li %
\thanks{This work was supported by the Hong Kong Research Grants Council under project 16202922, and the National Science Foundation of China under project 72201225. {(\em Corresponding author: Sen Li)}} %
\thanks{Z. Lai and S. Li are with the Department of Civil and Environmental Engineering, The Hong Kong University of Science and Technology, Clear Water Bay, Hong Kong (email: zlaiaa@connect.ust.hk, cesli@ust.hk).
}}

\maketitle

\begin{abstract}
This paper studies spatiotemporal pricing and fleet management for autonomous mobility-on-demand (AMoD) systems while taking elastic demand into account. We consider a platform that offers ride-hailing services using a fleet of autonomous vehicles and makes pricing, rebalancing, and fleet sizing decisions in response to demand fluctuations. A network flow model is developed to characterize the evolution of system states over space and time, which captures the vehicle-passenger matching process and demand elasticity with respect to price and waiting time. The platform's objective of maximizing profit is formulated as a constrained optimal control problem, which is highly nonconvex due to the nonlinear demand model and complex supply-demand interdependence. To address this challenge, an integrated decomposition and dynamic programming approach is proposed, where we first relax the problem through a change of variable, then separate the relaxed problem into a few small-scale subproblems via dual decomposition, and finally solve each subproblem using dynamic programming. Despite the nonconvexity, our approach establishes a theoretical upper bound to evaluate the solution optimality. The proposed model and methodology are validated in numerical studies for Manhattan. We find that compared to the benchmark case, the proposed upper bound is significantly tighter. We also find that compared to pricing alone, joint pricing and fleet rebalancing can only offer a minor profit improvement when demand can be accurately predicted. However, during unanticipated demand surges, joint pricing and rebalancing can lead to substantially improved profits, and the impacts of demand shocks, despite being more widespread, can dissipate faster.
\end{abstract}

\begin{IEEEkeywords}
    Spatiotemporal pricing, fleet management, elastic demand, autonomous mobility-on-demand
\end{IEEEkeywords}

\section{Introduction}
\IEEEPARstart{O}{wing} to technological advances in autonomous driving, the combination of on-demand ride-hailing services and autonomous vehicles (AVs), which we refer to as AMoD systems, presents an emerging paradigm that may reshape the landscape of future urban mobility. The AMoD system is deemed superior to its human-driven counterpart. One of the primary reasons lies in the fact that drivers' wage accounts for a significant proportion of operational expenditures in traditional ride-hailing platforms. In comparison, vehicle automation can circumvent the high labor cost, improve fleet efficiency, and accrue operational savings, which will fundamentally enhance the profitability of the ride-hailing industry. It is estimated that the cost of taxi services in San Francisco would drastically reduce by 74\% after automating the fleet \cite{becker_impact_2020}, and that replacing traditional taxis with shared and autonomous ones can downsize the fleet by 59\% without compromising the service level and thus lead to substantial reductions in carbon emissions \cite{lokhandwala_dynamic_2018}. 

Due to these advantages, operational strategies of AMoD platforms have attracted considerable attention from the research community. In the existing literature, many studies have focused on fleet management \cite{powell_fleet_2005}\cite{pillac_review_2013}, such as vehicle routing and relocation, to better service the time-varying demand over transportation networks. In fact,  fleet management of AMoD systems can be considered as the classic vehicle routing problem (VRP) \cite{dantzig_truck_1959}, which has been extensively studied in various forms, including dynamic VRPs \cite{pillac_review_2013},  VRP with Time Windows \cite{braysy_vehicle_2005}, pickup and delivery problems (PDPs) \cite{berbeglia_dynamic_2010}, and dynamic dial-a-ride problems (DARPs) \cite{cordeau_dial--ride_2007}. However, many existing works do not explicitly model passengers' demand elasticity with respect to price and waiting time. Both of these factors are crucial for travelers' decision-making in modern transportation systems, as riders have become increasingly sensitive to prices and service quality (e.g., waiting time) since the proliferation of various on-demand mobility services, such as Uber, Lyft, and Didi. Explicitly considering demand elasticity with respect to both price and waiting time is important in this context, but it poses significant modeling and operational challenges. The platform not only needs to determine the price over space and time to guide passenger demand but also needs to proactively relocate idle vehicles to steer the fleet distribution so that satisfactory service quality can be guaranteed. Among extensive research examining pricing \cite{sayarshad_scalable_2015}\cite{santos_taxi_2015} and relocation \cite{bent_waiting_2007}\cite{ma_dynamic_2019} strategies separately, few studies have jointly considered both while accounting for demand elasticity to price and waiting time. This is because the integration of these modeling elements gives rise to nonlinear dynamics and highly nonconvex optimal control problems. Consequently, global optimality cannot be guaranteed, and the optimality of the obtained solution is difficult to verify analytically. {\em To the best of our knowledge, we have not found any existing work that investigates joint spatiotemporal pricing and relocation of AMoD fleets with a comprehensive elastic demand model while at the same time characterizing the global optimality of the solution to the resulting nonconvex problem}.

\subsubsection*{Related Work}
Operations of AMoD systems are relevant to several mathematical problems, including dispatching, routing, scheduling, and rebalancing of service fleets. Examples include the VRP and its variants \cite{pillac_review_2013,dantzig_truck_1959,braysy_vehicle_2005}\cite{cordeau_dial--ride_2007}, where distance-minimized routes are to be planned to satisfy a set of transportation requests while considering different constraints. Within the context of AMoD systems, fleet operations have been examined primarily using queuing-theoretic models \cite{zhang_control_2016} \cite{iglesias_BCMP_2019}, simulation-based approaches \cite{fagnant_operations_2016}, and dynamic fluid models \cite{zhang_model_2016} \cite{salazar_congestion-aware_2019}. Particularly, Zhang et al. \cite{zhang_model_2016} addressed the optimal coordination of AMoD systems by devising a model predictive control algorithm, wherein the vehicle scheduling and routing problem was solved as a mixed integer linear program. Sayarshad and Chow \cite{sayarshad_non-myopic_2017} developed a queuing-based formulation for the relocation of MoD vehicles and solved the problem using a Lagrangian decomposition heuristic. Salazar et al. \cite{salazar_congestion-aware_2019} proposed a congestion-aware routing scheme for AMoD fleets and formulated a convex quadratic program based on a piecewise linear approximation for the Bureau of Public Roads (BPR) model. More recently, AMoD fleet operations have been extended to a decentralized manner. For instance, Chen et al. \cite{chen_decentralised_2021} proposed a decentralized cooperative cruising strategy for a fleet of autonomous taxis to maximize the total pickups during communication shutdowns. Duan et al. \cite{duan_centralized_2020} combined centralized and decentralized dispatching strategies to serve both immediate and reservation requests, where the decentralized dispatcher enables each vehicle to reserve capacity for long-term requests.

Despite a huge amount of research on fleet management and pricing strategies, only a handful of works have incorporated passengers’ sensitivity to price and waiting time. Some studies have utilized ride fare as a lever to manage demand \cite{sayarshad_scalable_2015}\cite{santos_taxi_2015}\cite{wollenstein-betech_joint_2020}, while waiting time was not considered. Given the nonlinear supply-demand interdependence, a complete and realistic demand model often yields nonconvex problems that are inherently challenging to solve exactly. As a result, existing studies that capture the aforementioned components primarily rely on approximate or learning-based algorithms for problem resolution. Among others, Lei et al. \cite{lei_path-based_2019} formulated the dynamic pricing and vehicle dispatching problem as a multi-period mathematical program with equilibrium constraints (MPEC) and solved the problem using approximate dynamic programming (ADP), where a piecewise linear value function approximation is introduced. Al-Kanj et al. \cite{al-kanj_approximate_2020} also utilized ADP to develop dispatching, repositioning, charging, and parking strategies, whereas the surge pricing policy was obtained separately using an adaptive learning approach embedded in the ADP framework. Turan et al. \cite{turan_dynamic_2020} used deep reinforcement learning to find a near-optimal pricing, routing, and charging policy for AMoD fleets. Nonetheless, these solution algorithms generally do not provide a proper metric (e.g., profit loss) to assess the derived operational decisions.

On the other hand, some studies focus on examining the optimality of different policies. For example, Braverman et al. \cite{braverman_empty-car_2019} established a fluid-based upper bound for optimally relocating empty cars to better meet passenger demand. Özkan and Ward \cite{ozkan_dynamic_2020} instead considered the matching problem in ride sharing systems and proved that their proposed policy achieves asymptotic optimality in a large market regime. However, neither of these studies captured pricing decisions and their impacts on passenger demand. Two other works considered model settings that are more similar to ours. Balseiro et al. \cite{balseiro_dynamic_2021} developed a dynamic pricing policy over a hub-and-spoke network and a performance bound based on Lagrangian relaxations that decompose the problem over spokes. Chen et al. \cite{chen_real-time_2023} approximated the stochastic dynamic pricing problem using a deterministic convex optimization, whose optimal solution serves as an upper bound for the revenue. Nevertheless, both models neglected the matching friction between vehicles and passengers, as well as the passenger's sensitivity to service quality. Our work fills the research gap by taking into account demand elasticity with respect to price and waiting time and meanwhile providing an upper bound.


\subsubsection*{Statement of Contributions}
The goal of this work is to establish a realistic mathematical model that incorporate demand elasticity with respect to both prices and waiting times to study joint pricing and fleet management of AMoD systems, while also establishing a theoretical upper bound to evaluate the global optimality of the solution. Specifically, our contributions are summarized as follows.
\begin{itemize}
    \item We formulate a network flow model to characterize the dynamics of AMoD systems\footnote{Throughout this paper, AMoD services specifically refer to on-demand ride-hailing services offered by AVs, but do not include other mobility services such as car-sharing, micro-transit, micro-mobility, ride-pooling, etc.}, capturing fundamental components such as vehicle-passenger matching, passenger waiting times, and demand elasticity with respect to both ride fare and waiting time. The comprehensive model encompasses a wide range of realistic scenarios and thus can facilitate understanding the operations of AMoD systems. The platform's profit maximization problem is formulated as a constrained optimal control problem, including prices, rebalancing flows, and fleet size adjustments as decision variables.

    \item We derive a theoretical upper bound to evaluate the solution optimality gap of the nonconvex optimization problem. To this end, we develop an integrated relaxation, decomposition, and dynamic programming approach, in which we first relax the original problem through a change of variable, then decouple the relaxed problem using dual decomposition, and finally employ dynamic programming to solve each small-scale subproblem. A tight surrogate upper bound is established, enabling us to measure the distance between the derived solution and the globally optimal solution despite nonconvexity. 

    \item We validate the proposed model with realistic case studies for Manhattan and present interesting managerial insights for AMoD operations. Our findings suggest that when demand prediction is accurate, joint pricing and fleet rebalancing do not lead to significant profit and throughput improvements compared to the case of pricing only. However, when demand prediction is seriously off, such as under an unexpected demand surge, the synergy between pricing and rebalancing becomes evident. Simulation results indicate that joint pricing and rebalancing decisions help dissipate the impacts of demand shocks faster despite their broad propagation. We further show that asymmetric demand patterns significantly affect the platform's operational strategies.
\end{itemize}

\section{Problem Formulation} \label{sec:formulation}
This section presents a network flow model to capture the system dynamics and details how AMoD operational strategies will affect the market outcomes.

\subsection{System Setup}
We consider a platform that provides ride-hailing services using an AV fleet in a city. The transportation network is represented as a graph $\m{G}(\m{K}, \m{E})$, where $\m{K} = \{1,\hdots,K\}$ is the set of zones, and $\m{E}$ is the set of edges. Each edge $(i,j) \in \m{E}$ connects a pair of adjacent zones. The trip time from zone $i$ to $j$, denoted as $\tau_{ij}$, is defined as the time traveling from zone $i$ to $j$ along the shortest path and is assumed to be exogenous (i.e., independent of the flow of ride-hailing vehicles). The total travel demand is exogenous and time-varying. Only a subset of travelers adopts AMoD services. Riders that request a trip from zone $i$ to $j$ are defined as type-$(i,j)$ passengers. 

Upon placing the ride request, each passenger will first wait for order confirmation (i.e., an idle car being assigned to service the passenger), then wait for being picked up by the assigned car, and at last, be delivered to her destination. Riders in these three statuses are defined as waiting passengers $Q^w_{i}(t)$, matched passengers $Q^m_{i}(t)$, and en-route passengers $Q^b_{ij}(t)$, respectively, where $t$ represents a time instant. On the supply side, according to whether vehicles are in operation or not, the entire fleet can be categorized into off-duty and on-duty vehicles. Off-duty vehicles, denoted as $N^p_{i}(t)$, are parked in each zone and unavailable to provide services. While on-duty vehicles are in one of the four statuses: (1) idling in each zone and waiting for the next passenger,\footnote{The difference between idle and off-duty vehicles is that idle vehicles are either cruising or queuing near the pickup point (e.g., taxi stand) and are therefore responsive to ride requests. Off-duty vehicles are parked and deactivated, thus cannot respond to further ride requests.} denoted as $N^v_{i}(t)$; (2) matched and on the way to pick up passengers, denoted as $N^m_{i}(t)$; (3) occupied and delivering passengers to destinations, denoted as $N^b_{ij}(t)$; and (4) relocating from zone $i$ to $j$ with empty seats, denoted as $N^r_{ij}(t)$. Let $N$ denote the total fleet size. The following vehicle conservation equation always holds:
\begin{multline}\label{eqn:vehicle conservation}
    N = \sum_{i\in\m{K}} N^v_{i}(t) + \sum_{i\in\m{K}} N^m_{i}(t) + \sum_{i\in\m{K}} \sum_{j\in\m{K}} N^b_{ij}(t) \\ + \sum_{i\in\m{K}} \sum_{j\in\m{K}} N^r_{ij}(t) + \sum_{i\in\m{K}} N^p_{i}(t).
\end{multline}
Note that ride-pooling services are not considered and thus en-route passengers and occupied vehicles have one-to-one correspondence, i.e., $Q^m_{i}(t) = N^m_{i}(t)$ and $Q^b_{ij}(t) = N^b_{ij}(t)$. As such, AMoD systems can be fully captured by the state vector $S(t) = [\b{Q}^w(t), \b{Q}^m(t), \b{Q}^b(t), \b{N}^v(t), \b{N}^r(t), \b{N}^p(t)]$, where $\b{Q}^w(t) = [Q^w_i(t)]_{i\in\m{K}}$, $\b{Q}^m(t) = [Q^m_i(t)]_{i\in\m{K}}$, $\b{Q}^b(t) = [Q^b_{ij}(t)]_{i,j\in\m{K}}$ correspond to passengers' state, and $\b{N}^v(t) = [N^v_i(t)]_{i\in\m{K}}$, $\b{N}^r(t) = [N^r_{ij}(t)]_{i,j\in\m{K}}$, and $\b{N}^p(t) = [N^p_i(t)]_{i\in\m{K}}$ represent the fleet state.

In response to the time-varying travel demand, the platform determines an operational strategy, including prices $\b{p}$, rebalancing flows $\b{r}$, and fleet size adjustments $\b{s}$, to maximize the profit, where $\b{p}$, $\b{r}$, and  $\b{s}$ represent vector trajectories over time. We emphasize that although the system state $S(t)$ is defined as a continuous-time vector that evolves continuously over time, the control decisions (such as $\b{p}$, $\b{r}$, and  $\b{s}$) will not be continuously updated by the platform in practice. Therefore, the control variables $(\b{p},\b{r},\b{s})$ are step-wise trajectories over time that are updated for every $T_c$ period of time (e.g., 5 min) and remain constant between every two steps. To specify $\b{p}$, we note that passenger ride fare depends on the trip origin\footnote{The origin-dependent pricing policy closely follows the industry practice. When demand goes up in some areas, Uber will decide the surge multipliers based on where the trip originates \cite{uber}. In this case, the ride fare only depends on the origin of the trip, which is consistent with our model.} and duration. Let $p_{i}(t)$ denote the charge rate for a trip starting from zone $i$ at time $t$. The ride fare for a type-$(i,j)$ trip is then $p_{i}(t) \tau_{ij}$. To specify $\b{r}$, we note that the platform can relocate idle and redundant vehicles from oversupplied zones to undersupplied ones, thus the number of vehicles being relocated from zone $i$ to support zone $j$ at time $t$ can be denoted as $r_{ij}(t)$. To specify $\b{s}$, we note that the platform adapts the fleet size to demand fluctuations by parking idle cars and/or activating parked cars, thus we denote $s_{i}(t)$ as the number of idle vehicles that will be parked in zone $i$, or off-duty vehicles parked in zone $i$ that will be activated. Specifically, $s_{i}(t) < 0$ represents idle vehicles go offline and get parked, whereas $s_{i}(t) > 0$ means off-duty vehicles parked in zone $i$ go online and become available to service passengers.

\subsection{Network Flow Model}
To logically unfold the system dynamics, we next present a network flow model based on the sequential process of each passenger’s arrival and departure. Specifically, passengers first make entry decisions following a demand model. These requests are then either confirmed by the platform or canceled by the passengers. Once the request is confirmed, the passenger is subsequently picked up, delivered, and dropped off.

\subsubsection*{Passenger Arrival} Let $\bar{q}_{ij}(t)$ be the total travel demand for type-$(i,j)$ trips and $q_{ij}(t)$ be the number of riders requesting services at time $t$. Passengers are sensitive to pickup time and ride fare. The demand is given by
\begin{equation} \label{eqn:demand function}
    q_{ij}(t) = \bar{q}_{ij}(t) e^{-\epsilon( \alpha w_{i}(t) + p_{i}(t) \tau_{ij} )},
\end{equation}
where $\epsilon$ is a sensitivity parameter, $\alpha$ is passengers' value of time, and $w_i(t)$ is the average waiting time for passengers to be picked up in zone $i$.\footnote{The ride confirmation time is typically significantly smaller than the pickup time (a few seconds v.s. a few minutes), thus for simplicity, we assume passenger demand is primarily elastic with respect to the pickup time $w_i(t)$. } The exponential term represents the proportion of travelers using AMoD services. Overall, this demand function is decreasing in ride fare and the quality of service, which characterizes the demand elasticity of passengers with respect to both prices and waiting times. 

\subsubsection*{Request Confirmation} New-coming passengers will enter the system and wait for request confirmation. The platform then confirms the order by matching each waiting passenger to an idle car. Empirical investigations in \cite{xu_generalized_2021} suggested that the matching between passengers and vehicles is $L$-shaped (see Fig. 4 in \cite{xu_generalized_2021}), i.e., waiting passengers will either be immediately matched under sufficient vehicle supply or queue up in the system for available cars. We thus specify the aggregate matching function as follows:
\begin{equation} \label{eqn:matching function}
    m_{ij}(t) = \dfrac{q_{ij}(t)}{\sum_{j\in\m{K}} q_{ij}(t)} \min \Big( Q^w_i(t), \max\big(N^v_i(t)-N^v_{\t{lb}},0\big) \Big),
\end{equation}
where $m_{ij}(t)$ represents the number of successful matches for type-$(i,j)$ trips at time $t$, and $N^v_{\t{lb}}$ is a threshold for the number of idle cars in each zone, which accounts for the practice that the platform will maintain the vehicle supply above some certain level to guarantee the quality of service. The interpretation of \eqref{eqn:matching function} is straightforward: the number of successful matches is the minimum between the number of waiting passengers and that of available idle vehicles. Note that the number of waiting passengers $Q^w_i(t)$ is tracked only by their origins but without destinations, whereas successful matches $m_{ij}(t)$ should include passengers' destination information. To characterize their relations, we use the destination distribution of new-coming passengers, i.e., the fractional term in \eqref{eqn:matching function}, to approximate that of waiting passengers. This is a reasonable approximation since travel demand distribution is usually stable for a short period and waiting passengers with different destinations are matched proportionally.

\subsubsection*{Request Cancellation} During busy hours, vehicles may be undersupplied, and waiting passengers accumulate in the system, leading to prolonged confirmation time. Passengers are typically impatient and will cancel the order when the confirmation time exceeds their maximum tolerance time. Intuitively, the request cancellation rate increases with the cumulative number of waiting passengers, but decreases with the number of idle cars. We adopt the following model to capture the request cancellation process:
\begin{equation} \label{eqn:cancallation}
    \tilde{m}_i(t) = \min \Big( Q^w_i(t), \max \big(0, c_0 + c_1 Q^w_i(t) + c_2 N^v_i(t) \big) \Big),
\end{equation}
where the number of request cancellations is linear to $Q^w_i(t)$ and $N^v_i(t)$ and truncated in $[0,Q^w_i(t)]$, and $c_0,c_1,c_2$ are weight parameters. Particularly, $c_1 > 0$ and $c_2 < 0$, which follows the intuition that the amount of request cancellations will increase if new-coming requests accumulate and congest the system, but decrease if idle vehicles are sufficiently supplied. These parameters depend on the distribution of passengers' maximum tolerance time and thus should be calibrated empirically. In this paper, we use microscopic simulations to validate this model and show in the Appendix that it can accurately capture passengers' request cancellation behavior.

\subsubsection*{Passenger Pickup} After order confirmation, passengers wait for the arrival of assigned cars. Assume that each passenger is matched to the nearest idle car. Therefore the pickup efficiency will improve with more available vehicles in the neighborhood. To characterize this dependence, the number of successful pickups is specified as
\begin{equation} \label{eqn:pickup function}
    b_{ij}(t) = \dfrac{q_{ij}(t)}{\sum_{j\in\m{K}} q_{ij}(t)} \beta_i Q^m_i (t) \left[N^v_i(t)\right]^{\theta_i}, 
\end{equation}
where $b_{ij}(t)$ depends on the number of matched passengers and idle vehicles, and $\beta_i$ and $\theta_i$ are zone-dependent parameters. The passenger pickup model \eqref{eqn:pickup function} is actually a special case of the Cobb-Douglas function, which is widely used to model the macroscopic vehicle-passenger meeting efficiency \cite{yang_equilibrium_2011,zha_economic_2016,nourinejad_ride-sourcing_2020} and has been validated in \cite{zhang_efficiency_2019} using real data.\footnote{Such a Cobb-Douglas functional form was once limited to equilibrium analysis and recently extended to time-varying conditions. See Fig. 3 in \cite{nourinejad_ride-sourcing_2020} for quantitative evaluation in non-equilibrium environments.} Similar to \eqref{eqn:matching function}, the destination distribution of new-coming passengers, i.e., the fractional term in \eqref{eqn:pickup function}, is used to approximate that of matched passengers $Q^m_i(t)$. In this case, passengers' pickup time in each zone can be further given by
\begin{equation} \label{eqn:pickup time}
    w_i(t) = \dfrac{Q^m_i (t)}{\sum_{j\in\m{K}} b_{ij}(t)} = \dfrac{1}{\beta_i} \left[N^v_i(t)\right]^{-\theta_i},
\end{equation}
which follows the Little's law and serves as an approximation in dynamical systems. This power-form passenger pickup time model has been extensively studied in the literature. See \cite{arnott_taxi_1996} and \cite{li_regulating_2019} for detailed derivations and see Fig. 2 in \cite{xu_generalized_2021} for empirical validation.

\subsubsection*{Trip Completion} Passengers exit from the system after being dropped off. The trip completion rate depends on the cumulative number of in-vehicle passengers, trip duration, and congestion levels \cite{ramezani_dynamic_2018}. Let $\tilde{q}_{ij}(t)$ denote the number of type-$(i,j)$ passengers being dropped off at time $t$. Clearly, the trip completion model should satisfy
\[\dfrac{\partial \tilde{q}_{ij}(t)}{\partial Q^b_{ij}(t)} > 0, \dfrac{\partial \tilde{q}_{ij}(t)}{\partial\tau_{ij}} < 0, \text{and} \lim_{Q^b_{ij}(t) \rightarrow 0} \tilde{q}_{ij}(t) = 0.\]
\noindent The monotonicity property follows our intuition that more passengers will get off the car if more vehicles are currently on the way to destinations, while fewer vehicles will arrive if the trip lasts a longer time. In this paper, we use the linear form in \cite{nourinejad_ride-sourcing_2020} to characterize the trip completion process:
\begin{equation} \label{eqn:trip completion}
    \tilde{q}_{ij}(t) = \dfrac{\kappa}{\tau_{ij}} Q^b_{ij}(t),
\end{equation}
where $\kappa > 0$ is a scaling parameter depending on traffic conditions and other factors. We evaluate \eqref{eqn:trip completion} with empirical data and find that $\tilde{q}_{ij}(t)$ and $Q^b_{ij}(t)$ demonstrate a significant linear dependent relationship (see Appendix). This model also applies to rebalancing trips. Let $\tilde{r}_{ij}(t)$ denote the number of type-$(i,j)$ relocating vehicles that complete rebalancing trips at time $t$. Consistent with \eqref{eqn:trip completion}, $\tilde{r}_{ij}(t)$ is given by
\begin{equation} \label{eqn:reposition completion}
    \tilde{r}_{ij}(t) = \dfrac{\kappa}{\tau_{ij}} N^r_{ij}(t).
\end{equation}

\subsubsection*{System Dynamics} Based on the above discussion, we can summarize the system dynamics in the following equations:
\begin{subequations} \label{eqn:dynamics}
\begin{align}
    & \dfrac{\mathrm{d} Q^w_{i}(t)}{\mathrm{d} t} = \sum_{j\in\m{K}} q_{ij}(t) - \sum_{j\in\m{K}} m_{ij}(t) - \tilde{m}_{i}(t), \label{eqn:dynamics Qw}\\
    & \dfrac{\mathrm{d} Q^m_{i}(t)}{\mathrm{d} t} = \sum_{j\in\m{K}} m_{ij}(t) - \sum_{j\in\m{K}} b_{ij}(t), \label{eqn:dynamics Qm}\\
    & \dfrac{\mathrm{d} Q^b_{ij}(t)}{\mathrm{d} t} = b_{ij}(t) - \tilde{q}_{ij}(t), \vphantom{\sum_{j\in\m{K}}} \label{eqn:dynamics Qb}\\
    & \dfrac{\mathrm{d} N^v_{i}(t)}{\mathrm{d} t} = s_{i}(t) + \sum_{j\in\m{K}} \tilde{q}_{ji}(t) + \sum_{j\in\m{K}} \tilde{r}_{ji}(t) \label{eqn:dynamics Nv} \\
    & \phantom{\dfrac{\mathrm{d} N^v_{i}(t)}{\mathrm{d} t} = \quad \quad \quad \quad} - \sum_{j\in\m{K}} m_{ij}(t) - \sum_{j\in\m{K}} r_{ij}(t), \notag \\
    & \dfrac{\mathrm{d} N^r_{ij}(t)}{\mathrm{d} t} = r_{ij}(t) - \tilde{r}_{ij}(t), \vphantom{\sum_{j\in\m{K}}} \label{eqn:dynamics Nr}\\
    & \dfrac{\mathrm{d} N^p_{i}(t)}{\mathrm{d} t} = -s_{i}(t), \vphantom{\sum_{j\in\m{K}}} \label{eqn:dynamics Np}     
\end{align}
\end{subequations}
which reveal the evolution of system states and capture the intricate supply-demand interaction spatially and temporally. Such a macroscopic network flow model can be calibrated to emulate the behaviors of a realistic microscopic system, making it a powerful tool for investigating how distinct operational strategies will affect the system performance.

\section{Derivation of Solutions with Bounded Optimality Gap} \label{sec:solution framework}
In this section, we formulate the profit-maximization problem for AMoD platforms, followed by an integrated relaxation, decomposition, and dynamic programming approach, which is developed to establish a theoretical upper bound for evaluating the global optimality of the derived solution.

\subsection{Profit Maximization}
The platform aims to maximize its total profit over the operation horizon $\m{T}$ by finding an optimal operational strategy. The pricing, rebalancing, and fleet sizing decisions are adjusted after every $T_c$ period of time to accommodate the time-varying demand, and remain constant between every two adjustments, while system states are updated continuously over time and affected by the platform's decisions through the dynamic model \eqref{eqn:dynamics}. The platform's profit maximization problem can be cast as:
\begin{subequations} \label{eqn:problem}
\begin{align}
    \max_{\b{p},\b{r},\b{s}} \ & \int\limits_{t\in\m{T}} \Bigg [ \sum_{i\in\m{K}} \sum_{j\in\m{K}} m_{ij}(t) p_{i}(t) \tau_{ij} - \gamma N^\t{on}(t) \Bigg] \mathrm{d}t \label{eqn:problem objective} \\
    s.t. \quad & \t{system dynamics \eqref{eqn:dynamics}},  \notag \\
    & r_{ij}(t) \geq 0, \ \forall i,j,t, \label{eqn:problem r bound} \\
    & 0 \leq p_{i}(t) \leq p_\t{ub}, \ \forall i,t, \label{eqn:problem p bound} \\
    & N^v_{i}(t) \geq N^v_{\t{lb}}, \ \forall i,t, \label{eqn:problem Nv bound}\\
    & 0 \leq N^p_{i}(t) \leq N^p_{\t{ub},i}(t), \ \forall i,t, \label{eqn:problem Np bound}\\
    & Q^w_{i}(t), Q^m_{i}(t), Q^b_{ij}(t), N^b_{ij}(t) \geq 0, \ \forall i,j,t, \label{eqn:problem state bound}
\end{align}
\end{subequations}
where $N^\t{on}(t) = N - \sum_{i\in\m{K}} N^p_i(t)$ represents the total number of vehicles in operations. The first term in \eqref{eqn:problem objective} represents the total revenue collected from passengers. Assume that each on-duty vehicles incurs an operational cost $\gamma$ per unit of time, while those off-duty ones will not bring about extra costs.\footnote{In practice, off-duty vehicles also lead to costs, e.g., parking fees. Considering the average cost of off-duty vehicles is typically lower than that of on-duty ones, we can normalize the cost to zero for simplicity.} Thereby the second term in \eqref{eqn:problem objective} represents operational costs for the entire fleet. Constraints \eqref{eqn:problem r bound}-\eqref{eqn:problem state bound} specify the bounds for decision variables and system states. Particularly, an upper bound $p_\t{ub}$ is imposed to prevent the unrealistically high price. In \eqref{eqn:problem Np bound}, $N^p_{\t{ub},i}(t)$ is an exogenous variable that accounts for the availability of parking spaces. Besides, operation decisions are also implicitly subject to physical constraints. For example, if the fleet size adjustment $s_i(t)$ is too large, it will lead to a negative number of off-duty cars in some certain zone, which violates \eqref{eqn:problem Np bound}. On the other hand, if $s_i(t)$ is too small (negative), it will lead to a negative number of idle vehicles, which violates \eqref{eqn:problem Nv bound}. Similarly, the rebalancing decisions are also implicitly bounded by \eqref{eqn:problem Nv bound}. Overall, \eqref{eqn:problem} is a constrained and nonconvex optimization problem. We emphasize that although the dynamics are formulated in continuous time, the optimization problem (\ref{eqn:problem}) is essentially equivalent to a discrete-time control problem because the control decisions are adjusted only for every $T_c$ period of time, and each control action will uniquely determine the system state at the time instant when the next decision is made.

\subsection{Derivation of Upper Bounds}
\label{upperbound_sec}
The complex demand and supply models lead to a nonlinear and nonconvex objective function and also introduce strong coupling among distinct decision variables, making the profit-maximization problem \eqref{eqn:problem} difficult to solve. For this reason, none of the existing works can characterize the global optimality of solutions while capturing these relations. In the rest of this subsection, we propose a novel decomposition and dynamic programming approach that can evaluate the solution optimality by offering a theoretical upper bound.

\subsubsection*{Model Relaxation}
It is clear that most complexities of problem \eqref{eqn:problem} arise from the high-dimensional state space, which is unscalable due to the existence of $Q^b_{ij}(t)$ and $N^r_{ij}(t)$. Moreover, the interdependence between states of different zones also exacerbates intractability. For instance, trip completion models in \eqref{eqn:dynamics Nv}, i.e., $\tilde{q}_{ij}(t)$ and $\tilde{r}_{ij}(t)$, relate the state of zone $i$ to that of other zones. Thereby, the states of different zones are coupled with each together, leading to a complicated decision process over the network. In this light, we relax the dynamical system and transform it into a decomposable one such that the complex and high-dimensional problem can be separated into smaller-scale problems for each zone.

First, we split $Q^b_{ij}(t)$ into intra- and inter-zone passengers based on their destinations, i.e.,
\begin{equation}
    \sum_{j\in\m{K}} Q^b_{ij}(t) = Q^{\t{intra}}_{i}(t) + Q^{\t{inter}}_{i}(t),
\end{equation}
where $Q^{\t{intra}}_{i}(t) = Q^b_{ii}(t)$ denotes the number of in-vehicle passengers en route to their destinations inside zone $i$, and $Q^{\t{inter}}_{i}(t) = \sum_{j\in\m{K} \setminus i} Q^b_{ij}(t)$ denotes the total number of in-vehicle passengers on the way to their destinations outside zone $i$. Similarly, completed trips $\tilde{q}_{ij}(t)$ are also categorized into two classes, i.e., $\tilde{q}^{\t{intra}}_{i}(t)$ and $\tilde{q}^{\t{inter}}_{i}(t)$, which follow the linear trip completion model \eqref{eqn:trip completion} and are given by
\begin{subequations} \label{eqn:relaxed trip completion}
\begin{align}
    \tilde{q}^{\t{intra}}_{i}(t) &= \dfrac{\kappa}{\tau_{ii}} Q^{\t{intra}}_{i}(t), \\
    \tilde{q}^{\t{inter}}_{i}(t) &= \dfrac{\kappa}{\bar{\tau}_i} Q^{\t{inter}}_{i}(t),
\end{align}
\end{subequations}
where $\bar{\tau}_i = \min \{\tau_{ij}\}_{j \in \m{V} \setminus i}$ represents the minimum travel time for an inter-zone trip departing from zone $i$. This relaxation implies an assumption that all outbound trips from zone $i$ share a homogeneous travel time $\bar{\tau}_i$, which does not have any physical interpretation but is only to disentangle the state interdependence associated with $\tilde{q}_{ij}(t)$ so as to reduce the dimensionality of state space.

Second, we consider the number of idle vehicles in each zone, i.e., $N^v_{i}(t)$, as a decision variable in place of the rebalancing flow $r_{ij}(t)$. An intuitive interpretation for this relaxation is that the platform can arbitrarily allocate all the {\em unoccupied} vehicles in the system, including those idle and off-duty, to any desired locations. This enables the platform to directly control the number of on-duty cars by manipulating $N^v_{i}(t)$. Thus $s_{i}(t)$ can be removed from the action space and $N^p_{i}(t)$ can also be removed from the state space. Correspondingly, the second inequality constraint in \eqref{eqn:problem Np bound} is dropped since the spatial distribution of parked vehicles is no longer tracked as system states. We emphasize that the change of decision variables does not neglect the rebalancing flows. Instead, it actually implies that all relocation trips will be completed instantaneously, i.e., $N^r_{ij}(t) = 0, \forall i,j,t$. Therefore, this relaxation not only reduces the dimensionality of state and action space, but also decouples the state interdependence associated with $\tilde{r}_{ij}(t)$. {\em Note that $N^v_{i}(t)$ needs to be determined every single time step, which is different from the original model.} This is because the aggregation of inter-zone trips compromises the destination information of en-route passengers, preventing us to precisely track the evolution of idle vehicles. Hence, $N^v_{i}(t)$ is also removed from the state space. 

After these two relaxations, the platform's decisions become $p_i(t)$ and $N^v_i(t)$. System states become $Q^w_{i}(t)$, $Q^m_{i}(t)$, $Q^{\t{intra}}_{i}(t)$, and $Q^{\t{inter}}_{i}(t)$, yielding the simplified dynamics:
\begin{subequations} \label{eqn:relaxed dynamics}
\begin{align}
    & \dfrac{\mathrm{d} Q^w_{i}(t)}{\mathrm{d} t} = \sum_{j\in\m{K}} q_{ij}(t) - \sum_{j\in\m{K}} m_{ij}(t) - \tilde{m}_{i}(t), \label{eqn:relaxed dynamics Qw}\\
    & \dfrac{\mathrm{d} Q^m_{i}(t)}{\mathrm{d} t} = \sum_{j\in\m{K}} m_{ij}(t) - \sum_{j\in\m{K}} b_{ij}(t), \label{eqn:relaxed dynamics Qm}\\
    & \dfrac{\mathrm{d} Q^{\t{intra}}_{i}(t)}{\mathrm{d} t} = b_{ii}(t) - \tilde{q}^{\t{intra}}_{i}(t), \label{eqn:relaxed dynamics Qintra}\vphantom{\sum_{j\in\m{K}}}\\
    & \dfrac{\mathrm{d} Q^{\t{inter}}_{i}(t)}{\mathrm{d} t} = \sum_{j\in\m{K} \setminus i} b_{ij}(t) - \tilde{q}^{\t{inter}}_{i}(t). \label{eqn:relaxed dynamics Qinter}
\end{align}
\end{subequations}
Further, the original optimization problem is relaxed to
\begin{subequations} \label{eqn:relaxed problem}
\begin{align}
    \max_{\b{p},\b{N}^v} \quad & \int\limits_{t\in\m{T}} \Bigg [ \sum_{i\in\m{K}} \sum_{j\in\m{K}} m_{ij}(t) p_{i}(t) \tau_{ij} - \gamma N^\t{on}(t) \Bigg] \mathrm{d}t \label{eqn:relaxed problem objective} \\
    s.t. \quad & \t{relaxed system dynamics \eqref{eqn:relaxed dynamics}},  \notag \\
    & 0 \leq p_{i}(t) \leq p_\t{ub}, \ \forall i,t, \\
    & N^v_{i}(t) \geq N^v_{\t{lb}}, \ \forall i,t, \\
    & Q^w_{i}(t), Q^m_{i}(t), Q^{\t{intra}}_{i}(t), Q^{\t{inter}}_{i}(t) \geq 0, \ \forall i,t, \\
    & N - N^\t{on}(t) \geq 0, \ \forall t, \label{eqn:relaxed problem constraint}
\end{align}
\end{subequations}
where $N^\t{on}(t) = \sum_{i\in\m{K}}[N^v_i(t) + Q^m_{i}(t) + Q^{\t{intra}}_{i}(t) + Q^{\t{inter}}_{i}(t)]$, and \eqref{eqn:relaxed problem constraint} corresponds to the fleet size constraint. Compared with the original problem, \eqref{eqn:relaxed problem} neglects some physical constraints in the system dynamics (e.g., travel time for repositioning trips), enables the platform to operate the fleet with more freedom, and thus promises an improved profit. This property is formalized as follows.
\begin{proposition} \label{prop:upper bound}
    The globally optimal solution to the relaxed problem \eqref{eqn:relaxed problem} is an upper bound for the original problem \eqref{eqn:problem}.
\end{proposition}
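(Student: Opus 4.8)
The plan is to prove the proposition by a feasibility-transfer (relaxation) argument: I would show that every feasible solution of the original problem \eqref{eqn:problem} induces a feasible solution of the relaxed problem \eqref{eqn:relaxed problem} whose profit is at least as large. This immediately gives that the optimal value of \eqref{eqn:relaxed problem} is no smaller than that of \eqref{eqn:problem}, and since the globally optimal solution of \eqref{eqn:relaxed problem} attains that optimal value, it is an upper bound on \eqref{eqn:problem}.

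Concretely, fix any feasible $(\b{p},\b{r},\b{s})$ for \eqref{eqn:problem} together with the state trajectory $S(t)$ it generates through \eqref{eqn:dynamics}. I would build a candidate for \eqref{eqn:relaxed problem} by keeping the same prices $\b{p}$; using the idle-vehicle trajectory $N^v_i(t)$ from $S(t)$ as the control $\b{N}^v$ (admissible because the relaxed formulation treats $N^v_i$ as a free decision subject only to $N^v_i(t)\ge N^v_{\t{lb}}$, which \eqref{eqn:problem Nv bound} already guarantees); keeping $Q^w_i(t)$ and $Q^m_i(t)$ as in $S(t)$; setting $Q^{\t{intra}}_i(t):=Q^b_{ii}(t)$; and defining $Q^{\t{inter}}_i(t)$ as the solution of \eqref{eqn:relaxed dynamics Qinter} started from $\sum_{j\in\m{K}\setminus i}Q^b_{ij}(0)$. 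The key observation making this work is that the pickup time $w_i$ in \eqref{eqn:pickup time} depends only on $N^v_i$, so $q_{ij}$, $m_{ij}$, $\tilde{m}_i$, and $b_{ij}$ are left unchanged; consequently the $Q^w_i$- and $Q^m_i$-equations of \eqref{eqn:relaxed dynamics} coincide identically with those of \eqref{eqn:dynamics}, the $Q^{\t{intra}}_i$-equation coincides because \eqref{eqn:relaxed trip completion} retains the true $\tau_{ii}$, and only $Q^{\t{inter}}_i$ genuinely changes.

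The technical core, and the step I expect to be the main obstacle, is controlling $Q^{\t{inter}}_i$, since naively reusing $\sum_{j\in\m{K}\setminus i}Q^b_{ij}(t)$ does not satisfy the aggregated dynamics with the homogenized completion time $\bar{\tau}_i=\min_{j\in\m{K}\setminus i}\tau_{ij}$. Instead I would compare the two trajectories: writing $y_i(t):=\sum_{j\in\m{K}\setminus i}Q^b_{ij}(t)$ and using $\bar{\tau}_i\le\tau_{ij}$ together with $Q^b_{ij}(t)\ge0$, one gets $\dot{y}_i=\sum_{j\in\m{K}\setminus i}b_{ij}-\sum_{j\in\m{K}\setminus i}(\kappa/\tau_{ij})Q^b_{ij}\ge\sum_{j\in\m{K}\setminus i}b_{ij}-(\kappa/\bar{\tau}_i)y_i$, whereas $Q^{\t{inter}}_i$ obeys the same relation with equality and the same forcing term $\sum_{j\in\m{K}\setminus i}b_{ij}$; a Gr\"onwall/comparison argument applied to $e_i:=y_i-Q^{\t{inter}}_i$ (with $e_i(0)=0$ and $\dot{e}_i\ge-(\kappa/\bar{\tau}_i)e_i$) then yields $0\le Q^{\t{inter}}_i(t)\le y_i(t)=\sum_{j\in\m{K}\setminus i}Q^b_{ij}(t)$ for all $t$.

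It then remains to verify feasibility and compare objectives. Feasibility: the price bounds and $N^v_i\ge N^v_{\t{lb}}$ are inherited; $Q^w_i,Q^m_i,Q^{\t{intra}}_i\ge0$ are inherited and $Q^{\t{inter}}_i\ge0$ follows from its defining ODE with nonnegative forcing; and the fleet constraint \eqref{eqn:relaxed problem constraint} holds because the vehicle-conservation identity \eqref{eqn:vehicle conservation} with $N^m_i=Q^m_i$, $N^b_{ij}=Q^b_{ij}$, and $N^r_{ij},N^p_i\ge0$ gives $\sum_i N^v_i+\sum_i Q^m_i+\sum_i(Q^{\t{intra}}_i+y_i)\le N$, which with $Q^{\t{inter}}_i\le y_i$ yields $N^{\t{on}}(t)=\sum_i[N^v_i+Q^m_i+Q^{\t{intra}}_i+Q^{\t{inter}}_i]\le N$. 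Objective: the revenue integrand $\sum_{i,j}m_{ij}p_i\tau_{ij}$ is identical across the two problems, while the on-duty count in \eqref{eqn:problem}, namely $\sum_i N^v_i+\sum_i Q^m_i+\sum_{i,j}Q^b_{ij}+\sum_{i,j}N^r_{ij}$, is at least the on-duty count $\sum_i[N^v_i+Q^m_i+Q^{\t{intra}}_i+Q^{\t{inter}}_i]$ of the constructed relaxed solution, so the operating cost $\gamma N^{\t{on}}$ is no larger in \eqref{eqn:relaxed problem} and its profit \eqref{eqn:relaxed problem objective} is no smaller than the profit \eqref{eqn:problem objective}. Taking the supremum over feasible solutions of \eqref{eqn:problem} finishes the proof; the only ancillary point to treat with care is that the induced $\b{N}^v$ is admissible under whatever temporal resolution \eqref{eqn:relaxed problem} imposes, which holds because the relaxed formulation grants $\b{N}^v$ at least as much temporal freedom as a state trajectory of the original model.
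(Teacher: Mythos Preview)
Your proof is correct and follows essentially the same route as the paper: map a feasible (the paper uses the optimal) solution of \eqref{eqn:problem} to a feasible pair $(\b{p},\b{N}^v)$ for \eqref{eqn:relaxed problem} by keeping the same prices and idle-vehicle trajectory, observe that $Q^w_i$, $Q^m_i$, and hence $m_{ij}$ are unchanged, and use the ODE comparison driven by $\bar{\tau}_i\le\tau_{ij}$ to obtain $Q^{\t{intra}}_i+Q^{\t{inter}}_i\le\sum_{j}Q^b_{ij}$, which together with $N^r_{ij}\ge0$ yields \eqref{eqn:relaxed problem constraint}. Your write-up is in fact slightly more complete than the paper's, which stops after verifying feasibility: you also explicitly check that the relaxed on-duty count, and hence the operating cost $\gamma N^{\t{on}}$, is no larger, so that the relaxed profit is at least the original profit.
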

\begin{proof}
    Let $\{\b{p}^*,\b{r}^*,\b{s}^*\}$ be the globally optimal solution to \eqref{eqn:problem} and $\b{S}^*$ be the consequent trajectory of system states.
    Based on \eqref{eqn:problem Np bound}, we have
    \begin{multline}\label{eqn:original nonnegativity}
        N - \sum_{i\in\m{K}} N^{v*}_{i}(t) - \sum_{i\in\m{K}} Q^{m*}_{i}(t) \\ - \sum_{i\in\m{K}} \sum_{j\in\m{K}} Q^{b*}_{ij}(t) - \sum_{i\in\m{K}} \sum_{j\in\m{K}} N^{r*}_{ij}(t) \geq 0.
    \end{multline}
    We prove the proposition by showing that $\{\b{p}^*,\b{N}^{v*}\}$ is always feasible for \eqref{eqn:relaxed problem}. Denote by $\hat{S}$ the resultant state trajectory after implementing $\{\b{p}^*,\b{N}^{v*}\}$ in the relaxed system \eqref{eqn:relaxed dynamics}. Besides, the evolution of $Q^w$ and $Q^m$ remains unchanged after the relaxation, i.e., $Q^{w*}_{i}(t) = \hat{Q}^w_{i}(t)$ and $Q^{m*}_{i}(t) = \hat{Q}^m_{i}(t)$. Therefore the realized passenger demands along both trajectories are the same, i.e., $q^*_{ij}(t) = \hat{q}_{ij}(t)$. Summing up \eqref{eqn:relaxed dynamics Qintra} and \eqref{eqn:relaxed dynamics Qinter} and comparing the combination with \eqref{eqn:dynamics Qb}, we have
    \begin{equation} \label{eqn:compare Qb}
        \sum_{j \in \m{K}} Q^{b*}_{ij}(t) \geq \hat{Q}^{\t{intra}}_{i}(t) + \hat{Q}^{\t{inter}}_{i}(t), \; \forall i,t.
    \end{equation}
    Note that $\hat{N}^{r}_{ij}(t) = 0$ after the relaxation. Inserting \eqref{eqn:compare Qb} into \eqref{eqn:original nonnegativity} gives rise to
    \begin{equation} \label{eqn:relaxed nonnegativity}
        N - \sum_{i\in\m{K}} \left[ N^{v*}_{i}(t) + \hat{Q}^m_{i}(t) + \hat{Q}^{\t{intra}}_{i}(t) + \hat{Q}^{\t{inter}}_{i}(t) \right] \geq 0,
    \end{equation}
    which satisfies \eqref{eqn:relaxed problem constraint}, guarantees the feasibility of $\{\b{p}^*,\b{N}^{v*}\}$ in \eqref{eqn:relaxed problem}, and thus completes the proof.
\end{proof}
The above discussion indicates that the solution to \eqref{eqn:relaxed problem} is always a surrogate upper bound for \eqref{eqn:problem} such that the solution optimality is verifiable despite its nonconvexity. Next, We will show that the relaxed problem has special structures that can be leveraged in developing the upper bound.

\subsubsection*{Decomposition and Dynamic Programming}
Although the proposed relaxation scheme considerably simplifies the model, the relaxed problem remains nonconvex. Fortunately, both the objective function and system dynamics in \eqref{eqn:relaxed problem} are separable. Only constraint \eqref{eqn:relaxed problem constraint} is coupled, enabling us to decompose \eqref{eqn:relaxed problem} into small-scale subproblems via dual decomposition. Let $\lambda(t) \geq 0$ be the Lagrange multiplier associated with constraint \eqref{eqn:relaxed problem constraint}. The partial Lagrangian reads as follows:
\begin{multline}
    \m{L}(\b{p}, \b{N}^v, \lambda) = \int\limits_{t\in\m{T}} \bigg [ \sum_{i\in\m{K}} \sum_{j\in\m{K}} m_{ij}(t) p_{i}(t) \tau_{ij} + \lambda(t)N \\ - \big(\gamma + \lambda(t)\big) \sum_{i\in\m{K}} \big( N^v_i(t) + Q^m_{i}(t) + Q^{\t{intra}}_{i}(t) + Q^{\t{inter}}_{i}(t) \big) \bigg] \mathrm{d}t,
\end{multline}
which is clearly decomposable over zones. Thus, given dual variables $\lambda(t)$, we can optimize the following subproblem for each zone over $p_{i}(t)$ and $N^v_{i}(t)$ separately:
\begin{subequations} \label{eqn:subproblem}
\begin{align}
    \max_{\b{p}_i,\b{N}^v_i} \quad & \int\limits_{t\in\m{T}} \bigg [ \sum_{j \in \m{V}} m_{ij,t} p_{i,t} \tau_{ij} - \big(\gamma + \lambda(t)\big) N^\t{on}_i(t) \bigg ] \mathrm{d}t \label{eqn:subproblem objective} \\
    s.t. \quad & \t{relaxed system dynamics \eqref{eqn:relaxed dynamics}}, \notag \\
    & 0 \leq p_{i}(t) \leq p_\t{ub}, \ \forall t, \\
    & N^v_{i}(t) \geq N^v_{\t{lb}}, \ \forall t, \\
    & Q^w_{i}(t), Q^m_{i}(t), Q^{\t{intra}}_{i}(t), Q^{\t{inter}}_{i}(t) \geq 0, \ \forall t,
\end{align}
\end{subequations}
where $N^\t{on}_i(t) = N^v_i(t) + Q^m_{i}(t) + Q^{\t{intra}}_{i}(t) + Q^{\t{inter}}_{i}(t)$. Since each subproblem is small-scale with a two-dimensional action space and a four-dimensional state space, we can solve them in parallel using dynamic programming. However, combining solutions to all subproblems may lead to infeasible decisions that violate the coupled constraint \eqref{eqn:relaxed problem constraint}. In this case, we can always construct a feasible solution to the primal problem by the following projection:
\begin{equation} \label{eqn:projection}
    \widetilde{N}^{v}_{i}(t) = N^{v}_{i}(t) + \min\big(0, N - N^\t{on}(t) \big) \dfrac{N^v_{i}(t)}{\sum_{i \in \m{K}} N^v_{i}(t)}.
\end{equation}

\begin{algorithm}[ht!]
\SetAlgoLined
\SetKwInOut{Input}{Input}\SetKwInOut{Output}{Output}
\caption{Dual Decomposition Algorithm}
\label{alg:dual decomposition}
\Input{Current state $S(t)$}
\Output{Upper bound for the original problem}
Initialize the multipliers $\lambda(t) \geq 0$;\\
Initialize the stopping criteria;\\
\While{$\t{stopping criteria not met}$}{
Solve each \eqref{eqn:subproblem} via dynamic programming; \\
Construct a feasible solution by projection \eqref{eqn:projection}; \\ 
\eIf{$\t{Converge}$}
{Break;}
{Update the multipliers with step size $l$:
\[\lambda(t) \leftarrow \min \big(0, \lambda(t) + l \left( N^\t{on}(t) - N \right) \big)\]
}}
\end{algorithm}

In the dual decomposition algorithm, we iteratively solve the subproblem for each zone and update the multipliers $\lambda(t)$ until the stopping criterion, either zero duality gap or maximal iterations reached, is met. Details of this procedure are given in Algorithm \ref{alg:dual decomposition}. If the duality gap reduces to zero at termination, constraint \eqref{eqn:relaxed problem constraint} is satisfied and the globally optimal solution to \eqref{eqn:relaxed problem} is attained. Otherwise, constraint \eqref{eqn:relaxed problem constraint} is violated, but the derived solution is still an upper bound for \eqref{eqn:relaxed problem} according to the duality theorem. Based on Proposition \ref{prop:upper bound}, in either case, we establish a surrogate upper bound for \eqref{eqn:problem} that can be used to evaluate its solution optimality.

\begin{remark}
    In implementing the dynamic programming step, the 4-dimensional state space should be discretized, and the value function for each point in the discretized space needs to be computed. This can be tedious because there exists a large number of points in the discretized space. However, we emphasize that it will not present any computational concerns for the proposed algorithm, because the value function in the dynamic programming step can be computed in parallel, so that computation speed increases linearly with respect to the number of cores in the computer. Based on our numerical simulation on an 8-core Dell desktop, we estimate that a single instance of a commercial cloud server with 96-cores will execute Algorithm 1 within 2-3 minutes.
\end{remark}

\subsection{Implementation}
The proposed dual decomposition algorithm and the corresponding upper bound for the profit-maximization problem are theoretically applicable for an arbitrary $\m{T}$ of any length, while the computational time grows polynomially with respect to the its length. In practice, however, it is unnecessary to implement the proposed algorithm for a very large $\m{T}$, e.g., 24 hours, because the control decisions will only influence system states in the near future. For instance, it is reasonable to assume that the fleet state of the AMoD system in the afternoon does not depend on the dispatch decisions in the morning. Therefore, beyond a certain range, further extending the operation horizon will only increase the computational time without significantly improving the objective value. In addition, the proposed algorithm requires knowledge of future travel demand, which can be very accurately predicted within a short period of time, but cannot over a relatively long horizon.

In this light, we tackle the joint pricing and fleet management problem using a truncated horizon, and solve \eqref{eqn:problem} in a receding horizon manner. At each step, rather than finding a complete optimal control sequence in one go, we maximize the platform's profit within a prediction horizon $\m{T}_p$ that is shorter than $\m{T}$, e.g., 30 minutes. The proposed network flow model is embedded in the computational framework as a prediction model, enabling us to forecast future system states and optimize the control sequence within the prediction horizon. The control sequence is then partly implemented on the plant (e.g., microscopic simulation environment) and the resultant state updates are passed on to the prediction model as an initial state. At last, the prediction horizon moves forward and this procedure is repeated until the entire operation horizon ends, similar to the model predictive control (MPC) framework. In implementation of the proposed procedure, the prediction model is discretized to approximate the solution to differential equations \eqref{eqn:dynamics} for ease of computation. The temporal granularity of discretization is usually in seconds, so that the approximation is rather accurate.

\begin{remark} \label{rmk:bound}
    Since the operation horizon of the original formulation $\m{T}$ is truncated to $\m{T}_p$, the established upper bound in Section \ref{upperbound_sec} is only for the optimization with a truncated horizon, leading to an additional gap due to the truncation. However, since the dynamics of the AMoD system has a relatively short memory, this additional gap can be reduced if we properly select $\m{T}_p$, so that the consequent profit loss is marginal while at the same time the computational time is acceptable. In our simulation, $\m{T}_p$ is selected to be 30 minutes. We will validate in the numerical study that  further extending $\m{T}_p$ will be unnecessary because it only leads to negligible improvement in the platform's profit (see Fig. \ref{fig:horizon}).
\end{remark}

\section{Numerical Studies} \label{sec:numerical study}
In this section, we first validate the proposed solution framework and then numerically simulate the AMoD system to understand its operational strategies.

\subsection{Experiment Settings}
\begin{figure}[!th]
    \centering
    \includegraphics[width=0.45\linewidth]{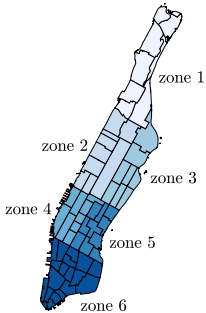}
    \caption{The six-zone partition of Manhattan, with zone 1 to 3 corresponding to Uptown and zone 4 to 6 corresponding to Midtown and Downtown.}
    \label{fig:manhattan map}
\end{figure}

\begin{figure}[!th]
    \centering
    \includegraphics[width=0.8\linewidth]{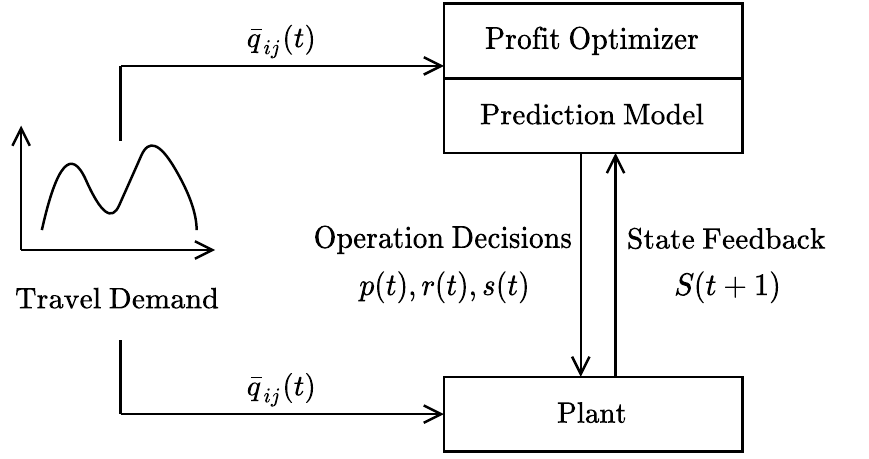}
    \caption{Schematic diagram of the MPC framework.}
    \label{fig:MPC}
\end{figure}
In the numerical studies, We use the well-known New York City Taxi and Limousine Commission dataset \cite{tlc}, which records in detail when and where each trip starts and ends. We only focus on the trips starting and ending in Manhattan. As shown in Fig. \ref{fig:manhattan map}, the island is partitioned into 6 zones.  Travel demand for each OD pair is synthesized based on the dataset. Given the OD demand, starting and ending locations of trips are randomly generated within the zone such that the average trip distance is close to real data. The total fleet size is set to be 3,000. The operational cost of each vehicle is assumed to be $\gamma=\t{\$10/hr}$. The lower bound of idle vehicle in each zone is set as 15 and the price ceiling is set as \$2.5/min. The length of operation horizon is 24 hours, while the length of prediction horizon in the MPC implementation is set to be 30 minutes. Fig. \ref{fig:MPC} is a schematic diagram of the computational framework, where the macroscopic network flow model is utilized as the prediction model and a microscopic simulation testbed is built as the plant. The synthetic demand data are sequentially fed into the profit optimizer and the plant. Other parameters in the prediction model are calibrated to match real data and the performance of the plant as next introduced. Simulations are carried out in MATLAB using a Dell desktop with Intel Core i7-9700 CPU and 32 GB RAM.

\subsection{Microscopic Simulation Testbed}
In the microscopic simulation environment, we track each vehicle's status, location, and destination, and also the information of passengers that are waiting to be matched. Idle cars will cruise in a zone until it is assigned a trip (either delivery or relocation) or sent offline. All vehicles except those off-duty ones move along the line segment connecting their origins and destinations at a constant speed. At each step, potential passengers make entry decisions depending on the price and pickup time. Greedy dispatch search method in \cite{nourinejad_ride-sourcing_2020} is adopted and thus the pickup time is computed as the travel time between the pickup location and the closest available vehicle. Passengers enter the system with heterogeneous trip information and will be matched right away if an idle vehicle is available in the same zone. Otherwise, they will wait for available cars. Each passenger has a maximum tolerance queuing time beyond which the passenger will cancel the request and exit from the system. The discrete-time system evolves with a step size of 20 seconds.

\begin{figure*}[!t]
\centering
\subfloat[Convergence at 8:00]{\includegraphics[width=0.35\textwidth]{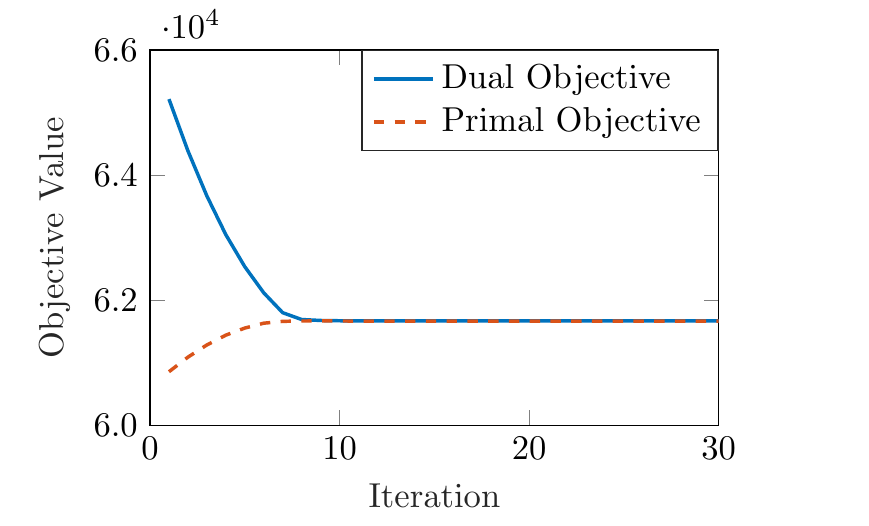}%
\label{fig:convergence 0800}}
\hspace{-0.8cm}
\subfloat[Convergence at 16:00]{\includegraphics[width=0.35\textwidth]{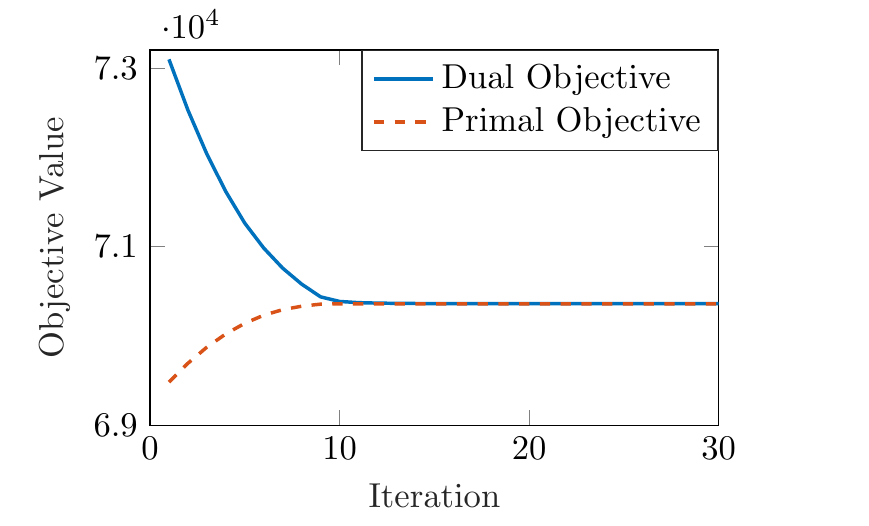}%
\label{fig:convergence 1600}}
\hspace{-0.8cm}
\subfloat[Convergence at 20:00]{\includegraphics[width=0.35\textwidth]{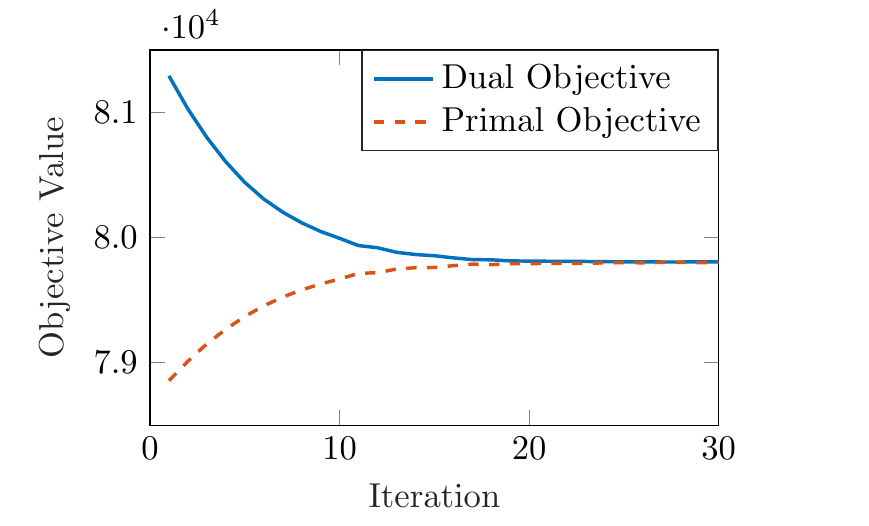}%
\label{fig:convergence 2000}}
\caption{Convergence performance of Algorithm \ref{alg:dual decomposition} at at different times of a day.}
\label{fig:convergence}
\end{figure*}

\begin{figure*}[!t]
\centering
\subfloat[Profit Comparison]{\includegraphics[width=0.35\textwidth]{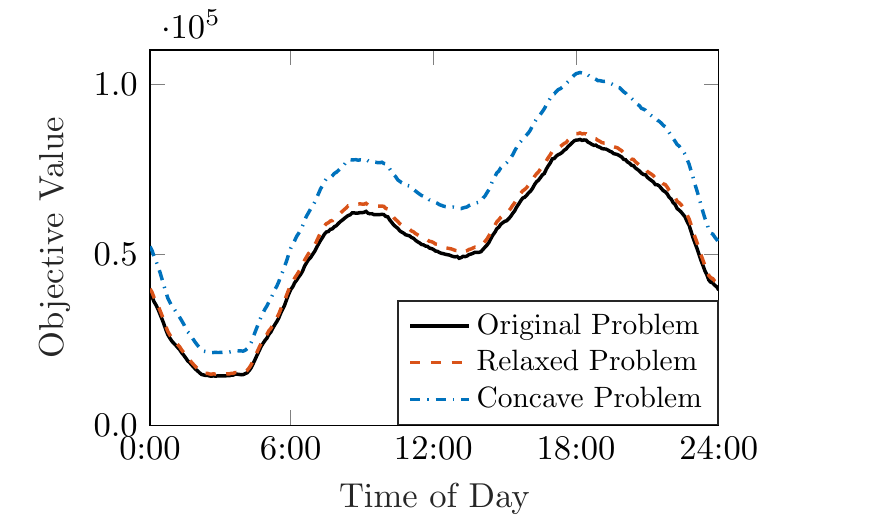}%
\label{fig:bound comparison}}
\hspace{-0.8cm}
\subfloat[Optimality Gap]{\includegraphics[width=0.35\textwidth]{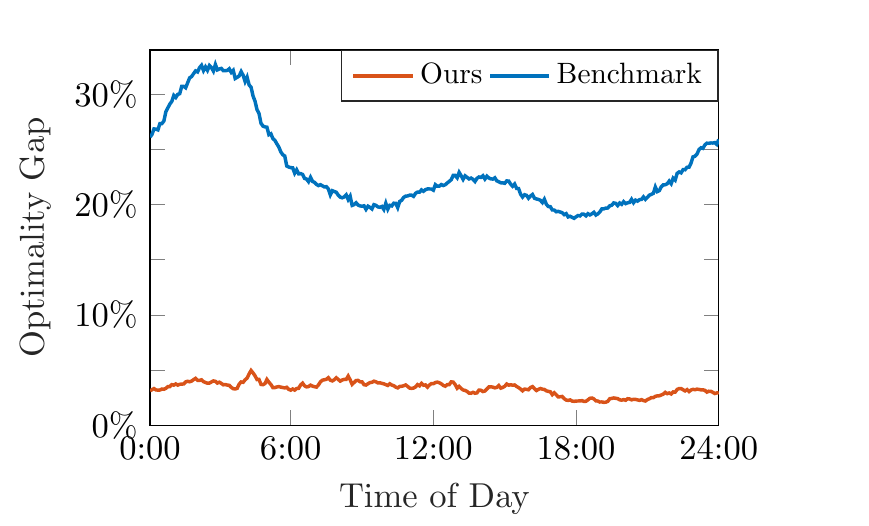}%
\label{fig:bound gap}}
\hspace{-0.8cm}
\subfloat[Profit versus Horizon Length]{\includegraphics[width=0.35\textwidth]{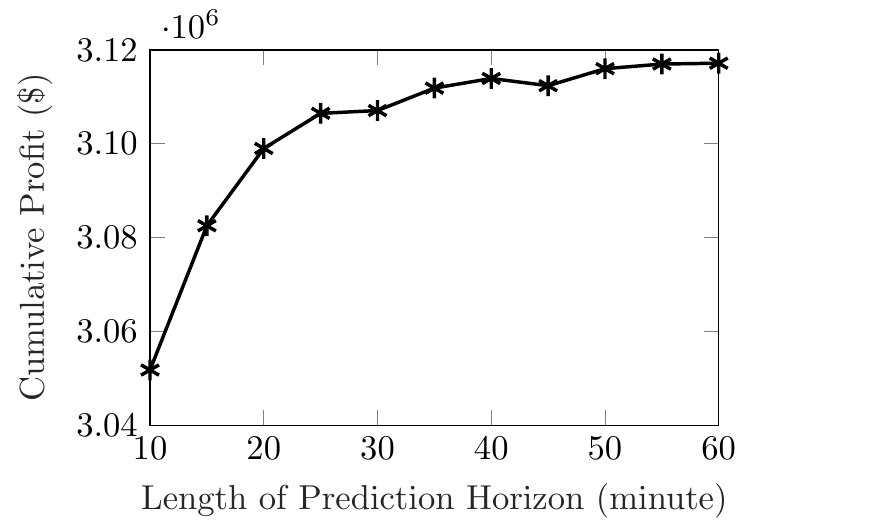}%
\label{fig:horizon}}
\caption{(a) compares the solution to \eqref{eqn:problem}, the established upper bound, and the benchmark upper bound based on a concave relaxation; (b) quantifies the optimality gap of the two bounds; (c) evaluates how the length of prediction horizon will affect profitability.}
\end{figure*}

\subsection{Performance of Upper Bounds}
We first solve the original problem \eqref{eqn:problem} is solved using interior-point methods, and then derive an upper bound by solving the relaxed problem. In Algorithm \ref{alg:dual decomposition}, all multipliers are initialized to 0.05, and the step size is set to 1e-5 with a decaying factor 0.99. Fig. \ref{fig:convergence} showcases the convergence performance at 8:00, 16:00, and 20:00, respectively. Our results show that the duality gap gradually converges to zero, and the performance remains consistent throughout the day. This confirms that we have obtained the {\em globally} optimal solution to the relaxed problem \eqref{eqn:relaxed problem}, and thus established a theoretical upper bound for the original problem \eqref{eqn:problem}.

To evaluate the quality of this bound, we further benchmark it against another theoretical upper bound built upon a simplified model \cite{chen_real-time_2023}, where demand elasticity with respect to waiting time,   vehicle-passenger matching friction, and request confirmation are not considered.  The idea is that the profit-maximization problem can be relaxed to a concave problem with linear dynamics and a concave objective function, where the globally optimal solution can be easily obtained to establish an upper bound for the original problem. Note that the modeling settings between our work and the benchmark case \cite{chen_real-time_2023} are distinct (e.g., we consider demand elasticity with respect to waiting time), thus the bound derived in \cite{chen_real-time_2023} is not directly applicable in our context. However, we can still compare these two by using the same idea in \cite{chen_real-time_2023} to relax our original problem to a concave one by simplifying our model. The detailed formulation and derivation can be found in the Appendix. Fig. \ref{fig:bound comparison} presents objective values of the original problem \eqref{eqn:problem}, the relaxed problem \eqref{eqn:relaxed problem}, and the concave problem \eqref{eqn:concave problem}, respectively, and Fig. \ref{fig:bound gap} provides a comparison between the two upper bounds. The results suggest that the proposed relaxation and decomposition method produces a better upper bound than the concave relaxation, with a smaller optimality gap of less than 5\% versus more than 20\%. This demonstrates that our method can provide a nontrivial and accurate reference for assessing the solution optimality of the original problem, despite its nonconvexity.

\begin{remark}
In Fig. \ref{fig:bound gap}, the optimality gap is relatively higher late at night (5\% at around 3:00), due to an underestimation of the duration of inter-zone trips in the proposed relaxation scheme. The inaccuracy is magnified because long-distance inter-zone trips account for a significant proportion of the total demand at late night. Fortunately, demand is at its lowest level during this period, and thus the temporarily higher gap will not lead to a significant profit loss.
\end{remark}

Besides, the length of prediction horizon is a crucial hyperparameter in the proposed computational framework. To examine its impacts on solution performance, we vary the prediction horizon from 10 to 60 minutes and compare the platform's cumulative profit. As shown in Fig. \ref{fig:horizon}, the results align with our intuition that a longer prediction horizon leads to increased profit. However, the improvement becomes marginal as the prediction horizon further extends. The curve in Fig. \ref{fig:horizon} follows an upward trend but is not consistently monotonic because of the randomness in the microscopic simulation environment. Considering a long prediction horizon will result in a large-scale optimization and thus impede real-time implementation, we fix it to be 30 minutes in subsequent case studies to balance the optimization performance and computation complexity. These results also echo our discussion in Remark \ref{rmk:bound} that the optimality gap due to the gap between $\m{T}$ and $\m{T}_p$ is insignificant at the selected length.

\subsection{Joint Pricing and Rebalancing}
\begin{table*}[!ht]
\centering
\caption{Comparison between Joint Pricing \& Rebalancing and Pricing Only.\label{tab:comparison}}
\begin{tabular}{cccccccccccc}
\hline
\multirow{2}{*}{} & \multicolumn{3}{c}{24-hr normal period} &  & \multicolumn{3}{c}{30-min demand surge in zone 3} &  & \multicolumn{3}{c}{30-min demand surge in zone 4} \\ \cline{2-4} \cline{6-8} \cline{10-12} 
& with rebalance & w/o rebalance & \% gap &  
& with rebalance & w/o rebalance & \% gap &  
& with rebalance & w/o rebalance & \% gap \\ \hline
profit    & 2.5437e+06      & 2.5126e+06   & 1.09\% &  & 8.6274e+04          & 8.1065e+04       & 7.62\%     &  & 8.7910e+04          & 7.9004e+04       & 11.27\%    \\
trips     & 1.3442e+05      & 1.3306e+05   & 1.02\% &  & 3.8510e+03          & 3.7450e+03       & 2.83\%     &  & 4.0170e+03          & 3.9030e+03       & 2.92\%     \\ \hline
\end{tabular}
\end{table*}

\begin{figure*}[!th]
\centering
\subfloat[With Rebalance. Demand Surges in Zone 4.]{\includegraphics[width=1.0\textwidth]{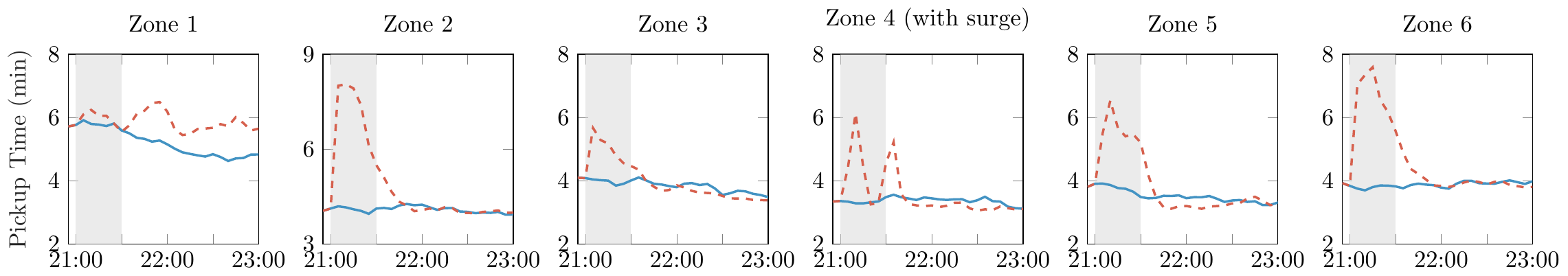}%
\label{fig:surge 2100 4}}
\vspace{0cm}
\subfloat[Without Rebalance. Demand Surges in Zone 4.]{\includegraphics[width=1.0\textwidth]{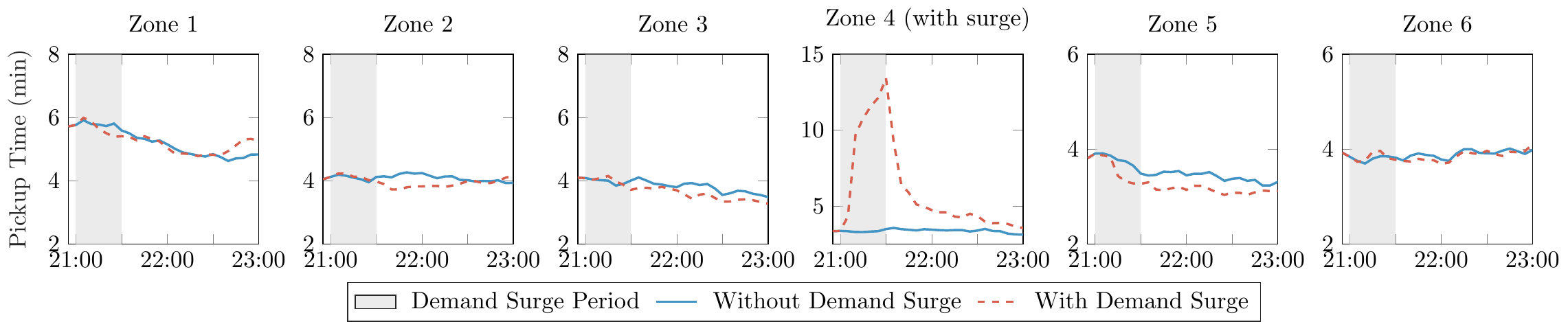}%
\label{fig:surge 2100 4 pricingonly}}
\caption{Comparing passengers' pickup time with and without fleet rebalance when a demand shock (highlighted in gray) is imposed on different zones.}
\label{fig:surge}
\end{figure*}

In the proposed model, the platform jointly makes pricing and fleet rebalancing decisions, whilst its value remains unclear. To investigate the impacts of joint operation, we compare the cumulative profit and total trips delivered with the case of pricing only, where fleet rebalancing is disabled. The results are summarized in Tab. \ref{tab:comparison}. Surprisingly, fleet rebalancing does not lead to notable benefits, with merely a 1\% increase in profit and throughput. When future demand is accurately predicted, the platform can plan vehicle dispatch long ahead of time to minimize the total rebalancing trips. However, differentiated pricing in space and time can also coordinate demand and supply, although with less flexibility than fleet rebalancing. Therefore, joint pricing and rebalancing will only lead to minor improvements during the normal period.

In contrast, the benefits of fleet rebalancing stand out when demand prediction is seriously off, e.g., demand surges abruptly due to the interruption of public transit. In this case, the spatial disparity between supply and demand is significantly escalated, necessitating prompt and flexible fleet dispatch that pricing only cannot efficiently provide. To emulate the unexpected demand shock, we triple the travel demand in one of those six zones for 30 minutes. The spike is respectively imposed on zone 3 and 4 at 21:00, when no spare vehicles are available to help service the surging needs. During the 30-minute demand surge period, joint supply-demand coordination can increase the platform's profit by up to 10\% and its throughput by around 3\%, which highlights the importance of fleet management under system perturbations.

We further show passengers' pickup times with and without fleet rebalancing in Fig. \ref{fig:surge}. These results reveal the fundamental difference between the two operational strategies during the surge period. With fleet rebalancing, the platform proactively dispatches vehicles from neighboring areas to support the zone short of supply. Pickup time in each zone thus increases at varying degrees, indicating that impacts of demand shock propagate in the city. However, when fleet rebalancing is not considered, the platform responds to demand surge only by raising prices in a single zone. Thereby we see in Fig. \ref{fig:surge 2100 4 pricingonly} that passengers' pickup time drastically increases in zone 4 (from 3 to 13 minutes) while all other zones are almost unaffected. Moreover, the quality of service returns to normal in about 30 minutes with fleet rebalancing, but in over 1 hour without it, showing that impacts dissipate faster when the fleet can be rebalanced. This sheds light on the fact that joint pricing and rebalancing can improve system responsiveness to unexpected scenarios at the cost of neighboring zones.

\subsection{Analysis of operational Strategies}
\begin{figure*}[!t]
\centering
\subfloat[Average Trip Fare]{\includegraphics[width=0.35\textwidth]{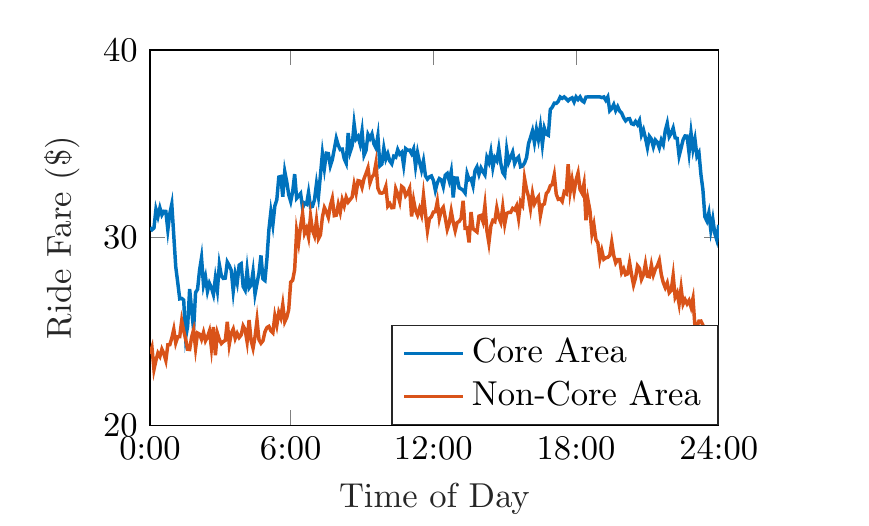}%
\label{fig:trip fare}}
\hspace{-0.8cm}
\subfloat[Rebalancing Flow]{\includegraphics[width=0.35\textwidth]{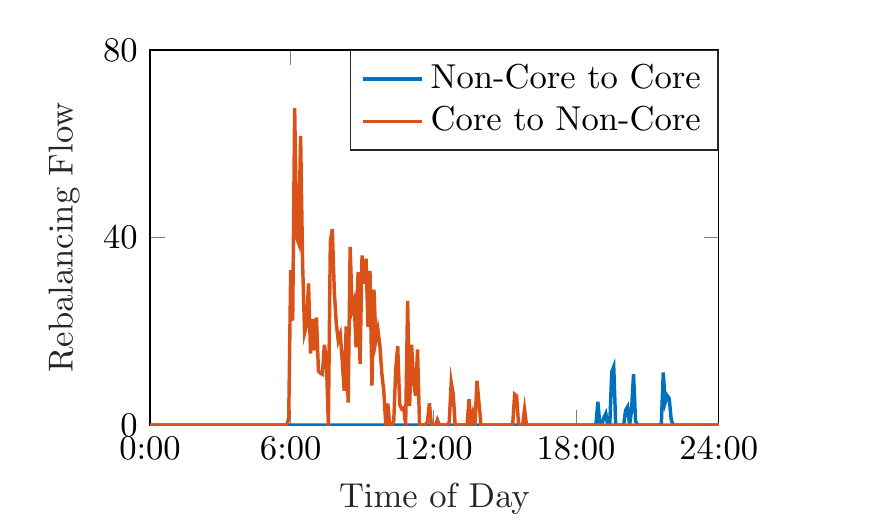}%
\label{fig:rebalance flow}}
\hspace{-0.8cm}
\subfloat[Parking Distribution]{\includegraphics[width=0.35\textwidth]{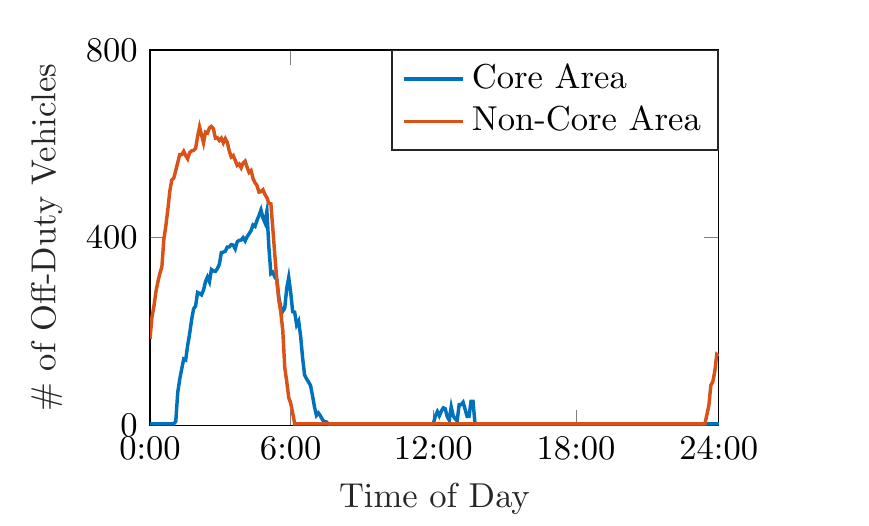}%
\label{fig:parking distribution}}
\caption{The platform's operational strategies in a day, with zone 1-3 being aggregated as the non-core area and zone 4-6 being aggregated as the core area.}
\label{fig:strategy}
\end{figure*}

To understand the platform's operational strategies in both spatial and temporal dimensions, we simulate the system evolution for 24 hours. According to real data, over 68\% of trips in Manhattan originate from zones 4, 5, and 6. Therefore, we categorize the island into two distinct areas based on their travel demand levels to facilitate analysis, with zones 1 to 3 defined as the non-core area and zones 4 to 6 defined as the core area. Fig. \ref{fig:trip fare} presents the fare for a 15-minute trip starting from different areas, Fig. \ref{fig:rebalance flow} presents the number of vehicles being relocated between the two areas, and Fig. \ref{fig:parking distribution} showcases the spatial distribution of off-duty vehicles.

It is clearly observed that the spatial imbalance of demand has a significant impact on the platform's operational strategies. In the morning, many passengers flood in the urban core, leading to insufficient supply in the non-core area. Whereas commuters return to the non-core area in the evening, leaving the urban core short of vehicles. The asymmetric demand results in an unbalanced fleet distribution, which requires external interventions to mitigate the spatial mismatch between supply and demand and maintain the quality of service. In response to the upcoming evening rush hours, for instance, the platform lowers the fare in the non-core area at around 17:30. On the contrary, prices in urban cores remain high until 19:00. Such a pricing policy can suppress demand in high-demand zones and meanwhile incentivize trips that depart from low-demand zones. While during morning rush hours, as shown in Fig. \ref{fig:rebalance flow}, lots of vehicles are dispatched to non-core area to serve the demand peak in the morning.

Aside from pricing and rebalancing, the platform also resorts to fleet size adjustments to steer the spatial distribution of supply. According to Fig. \ref{fig:parking distribution}, the fleet is proactively resized in accordance with the time-varying demand levels. For example, a part of the fleet will be parked in the early afternoon (from 12:00 to 13:30) and late at night (from 23:00 to 6:00). These periods align with the off-peak hours, during which vehicles are oversupplied. The platform thus deactivates redundant cars to circumvent high operational costs. The temporal pattern of parking behaviors echoes the finding in \cite{xu_optimal_2017} that demand for parking spaces is inversely proportional to passenger demand during off-peak hours. Nonetheless, the two parking periods demonstrate distinct spatial patterns. More off-duty vehicles are retained in the non-core area at night but in the core area at noontime. This again results from the demand imbalance. After the evening peak, cars concentrate in the non-core area. The platform tends to leave them there for overnight parking, which can avoid the operational cost of relocation trips and also prepare for the next morning peak. Likewise, vehicles accumulate in the urban core after the morning peak and then are parked right there at noontime off-peak hours so as to serve the forthcoming evening peak. These findings reveal that the provision of parking spaces is intimately related to the spatiotemporal demand pattern, which may advise infrastructure planning.

\section{Conclusion} \label{sec:conclusion}
This paper considers the spatiotemporal pricing and fleet management strategies for the AMoD platform taking an elastic demand model into account. The system dynamics are formulated as a network flow model, which simultaneously captures essential components, including passenger waiting time, demand elasticity, vehicle-passenger matching, etc. Model predictive control is used to compute a near-optimal solution, and a relaxation scheme is proposed to transform the original problem into a decomposable one. The relaxed problem is then solved by dual decomposition and dynamic programming, which establishes a theoretical upper bound to evaluate the solution optimality. Compared to the benchmark upper bound based on a concave relaxation, our bound provides a more accurate estimation of the globally optimal solution to the original nonconvex problem.

Comprehensive case studies for Manhattan are conducted to understand the operations of AMoD systems. First, we find that when demand can be accurately predicted, jointly making pricing and rebalancing decisions only leads to minor profit and throughput improvements than the case of pricing only, because a pricing strategy with spatial and temporal differentiation can also coordinate demand and supply. However, joint pricing and rebalancing can improve profits by up to 10\% under abrupt demand surges. We also show that with fleet rebalancing, the impacts of demand shocks will propagate broader but dissipate faster. Furthermore, numerical results suggest that the platform’s operation decisions are made in response to the asymmetric demand patterns. Specifically, vehicles are dispatched from core to non-core areas at the morning peak and are relocated along the opposite direction at the evening peak. Besides, most redundant vehicles are parked in non-core areas during evening off-peak hours but in urban cores during noontime off-peak hours, which provides insights for the provision of parking spaces. 

Future studies can extend the proposed model into a stochastic setting with uncertain demand. Another promising extension is to take ride-pooling services into account, possibly by introducing a discount ratio in the aggregate matching function \eqref{eqn:matching function} as \cite{xu_generalized_2021} did. In addition, one may incorporate congestion externality into the model to examine how AMoD systems will affect urban transportation systems. The charging strategy in electric AMoD systems also warrants further investigation.

\appendix
\section*{Validation of Request Cancellation Model}
The performance of the request cancellation model \eqref{eqn:cancallation} is validated using empirical data generated in the microscopic simulation environment. We assume passengers' maximum tolerance time is normally distributed with mean 2 minutes and standard deviation 30 seconds. As shown in Fig.~\ref{fig:request cancellation evaluation}, requests will be canceled only when idle cars are inadequate while waiting passengers accumulate. The empirical data are fitted by the black plane in grids and it demonstrates that a linear model can well characterize the relationship among cancellations, idle cars, and waiting passengers.
\begin{figure}[!th]
\centering
\includegraphics[width=0.95\linewidth]{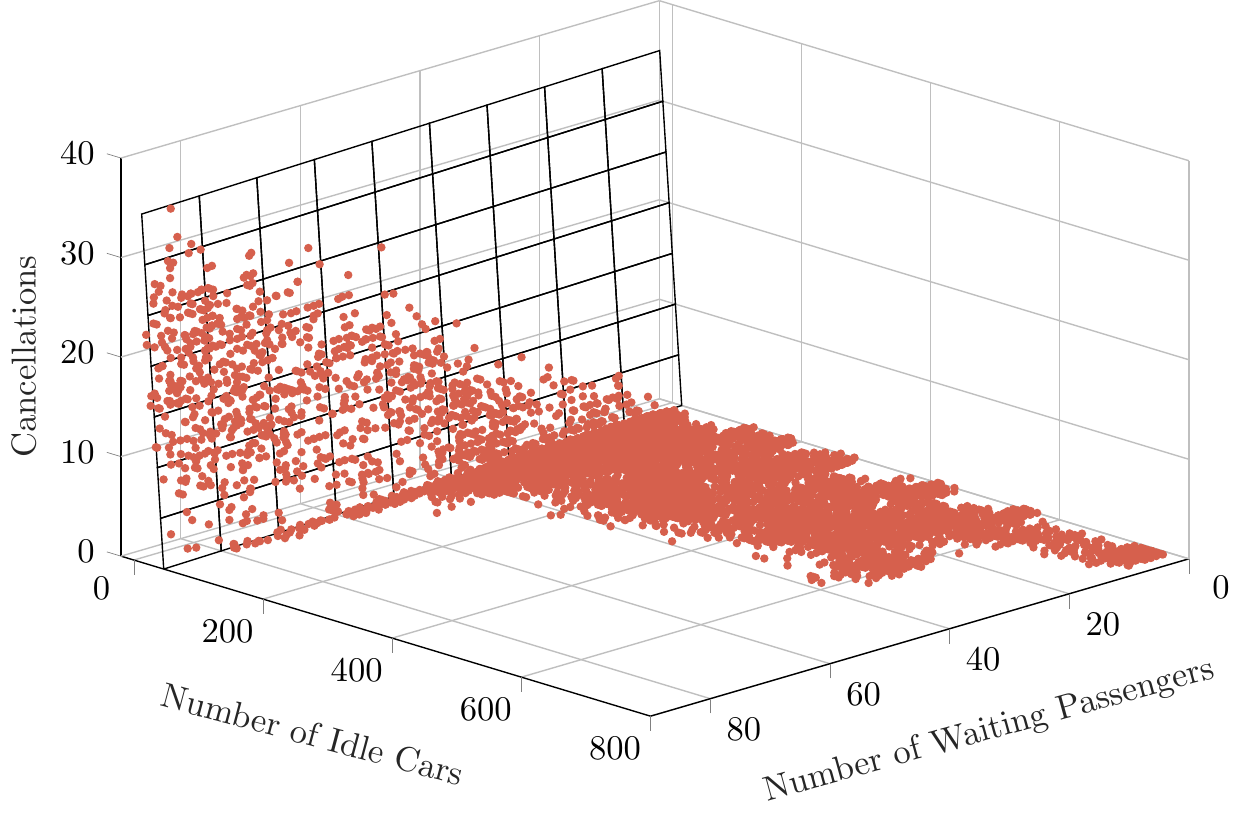}
\caption{Validation of the linear request cancellation model.}
\label{fig:request cancellation evaluation}
\end{figure}

\section*{Validation of Trip Completion Model}
The linear trip completion model \eqref{eqn:trip completion} is assessed using real data from \cite{tlc}. Fig.~\ref{fig:trip completion evaluation} presents the number of on-board passengers ($Q^b_{ij}$) versus the number of completed trips ($\tilde{q}_{ij}$) for different OD pairs. It can be observed that $Q^b_{ij}$ and $\tilde{q}_{ij}$ demonstrate a strong linear correlation. Therefore, the linear model \eqref{eqn:trip completion} suffices to capture the dynamics of trip completion.
\begin{figure}[!th]
\centering
\includegraphics[width=0.95\linewidth]{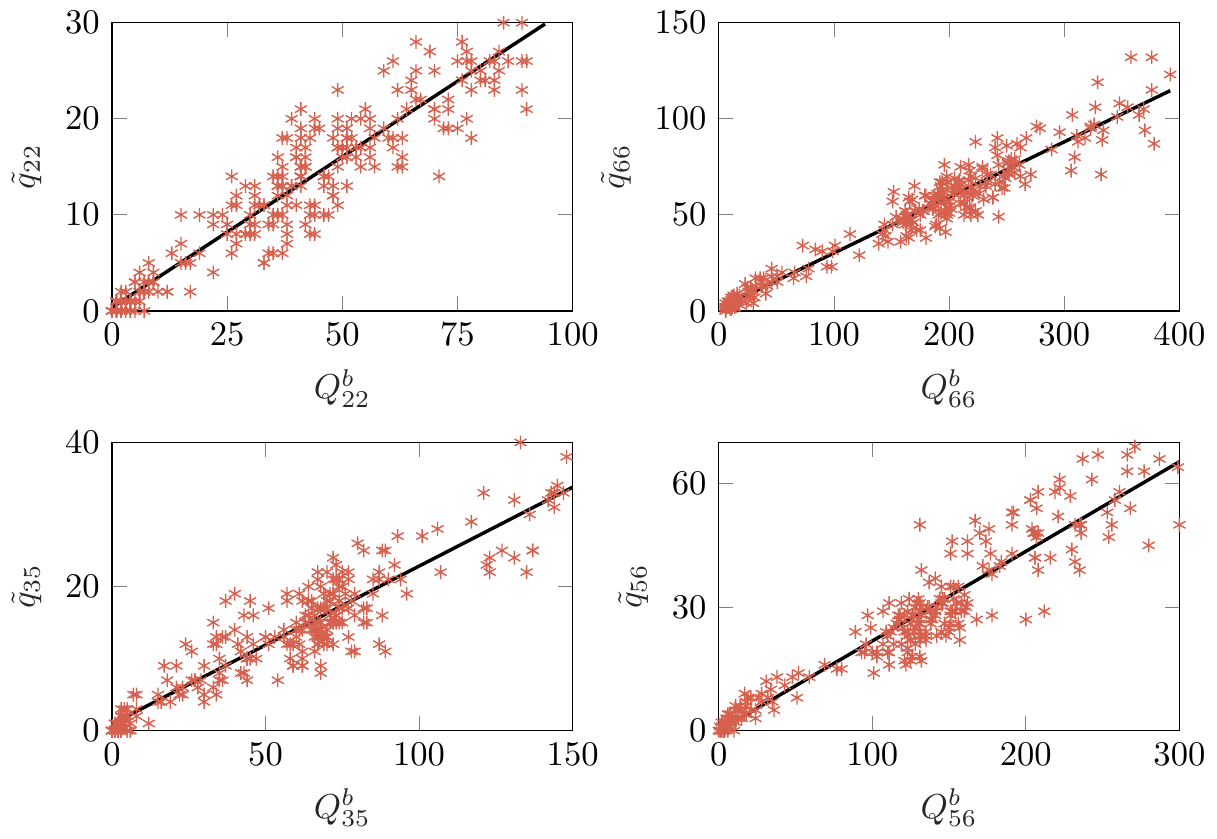}
\caption{Validation of the linear trip completion model for different OD pairs.}
\label{fig:trip completion evaluation}
\end{figure}

\section*{Concave Relaxation and Benchmark}
To offer a benchmark for the established upper bound to compare against, we adopt a concave formulation of the spatiotemporal pricing and fleet management problem as in \cite{chen_real-time_2023}, where request confirmation and cancellation as well as passenger pickup are not considered.

Specifically, to make this concave relaxation work in our context, two major modifications are made compared to the original formulation \eqref{eqn:problem}. First, instant request confirmation and passenger-vehicle matching are assumed. Each new-coming passenger is matched to and picked up by an idle vehicle immediately after placing the request. As such, system states are reduced to $Q^b_{ij}(t)$, $N^v_{i}(t)$, $N^r_{ij}(t)$, and $N^p_{i}(t)$, and system dynamics become linear, which significantly enhances the problem's tractability. To avoid a trivial bound, passenger waiting time $w_i(t)$ is fixed at its smallest possible value\footnote{We choose this constant value by reasonably assuming that there exists an upper bound on the proportion of vehicles concentrated in a single zone (e.g., 1/3). Intuitively, this upper bound avoids the extreme case where a significant amount of idle vehicles are concentrated in one zone, while all other zones are underserved.} instead of 0. Second, considering the demand function \eqref{eqn:demand function} is invertible, demand rate $z_{ij}(t) = q_{ij}(t)/\bar{q}_{ij}(t) \in [0, 1]$ is used as a decision in place of price $p_i(t)$. The platform's revenue then becomes a function of $z_{ij}(t)$. Although the revenue may not be concave in price, once the waiting time is fixed, the revenue is known to be concave in demand rate for most demand functions, such as linear, exponential, logit, among others \cite{chen_real-time_2023}. Note that $z_{ij}(t)$ is OD dependent and so is the corresponding trip fare. This relaxation is to preserve concavity after the change of variables.

With these two modifications, the system becomes linear:
\begin{subequations} \label{eqn:linear dynamics}
\begin{align}
    & \dfrac{\mathrm{d} Q^b_{ij}(t)}{\mathrm{d} t} = q_{ij}(t) - \tilde{q}_{ij}(t), \vphantom{\sum_{j\in\m{K}}} \label{eqn:linear dynamics Qb}\\
    & \dfrac{\mathrm{d} N^v_{i}(t)}{\mathrm{d} t} = s_{i}(t) + \sum_{j\in\m{K}} \tilde{q}_{ji}(t) + \sum_{j\in\m{K}} \tilde{r}_{ji}(t) \label{eqn:linear dynamics Nv} \\
    & \phantom{\dfrac{\mathrm{d} N^v_{i}(t)}{\mathrm{d} t} = \quad \quad \quad \quad} - \sum_{j\in\m{K}} q_{ij}(t) - \sum_{j\in\m{K}} r_{ij}(t), \notag \\
    & \dfrac{\mathrm{d} N^r_{ij}(t)}{\mathrm{d} t} = r_{ij}(t) - \tilde{r}_{ij}(t), \vphantom{\sum_{j\in\m{K}}} \label{eqn:linear dynamics Nr}\\
    & \dfrac{\mathrm{d} N^p_{i}(t)}{\mathrm{d} t} = -s_{i}(t). \vphantom{\sum_{j\in\m{K}}} \label{eqn:linear dynamics Np}     
\end{align}
\end{subequations}
And the profit maximization problem becomes concave:
\begin{subequations} \label{eqn:concave problem}
\begin{align}
    \max_{\b{z},\b{r},\b{s}} \ & \int\limits_{t\in\m{T}} \Bigg [ \sum_{i\in\m{K}} \sum_{j\in\m{K}} q_{ij}(t) p_{i}(t) \tau_{ij} - \gamma N^\t{on}(t) \Bigg] \mathrm{d}t \label{eqn:concave problem objective} \\
    s.t. \quad & \t{linear system dynamics \eqref{eqn:linear dynamics}},  \notag \\
    & r_{ij}(t) \geq 0, \ \forall i,j,t, \label{eqn:concave r bound} \\
    & 0 \leq z_{i}(t) \leq 1, \ \forall i,t, \label{eqn:concave q bound} \\
    & N^v_{i}(t) \geq N^v_{\t{lb}}, \ \forall i,t, \label{eqn:concave Nv bound}\\
    & 0 \leq N^p_{i}(t) \leq N^p_{\t{ub},i}(t), \ \forall i,t, \label{eqn:concave Np bound}\\
    & Q^b_{ij}(t), N^b_{ij}(t) \geq 0, \ \forall i,j,t. \label{eqn:concave state bound}
\end{align}
\end{subequations}
Since the relaxed profit-maximization problem \eqref{eqn:concave problem}  is concave, its globally optimal solution can be easily derived, which provides  an upper bound for the original problem \eqref{eqn:problem}. This bound is considered as the benchmark and is compared against the proposed decomposition and dynamic programming approach.

\bibliographystyle{IEEEtran}
\bibliography{TITS2022}

\begin{thebibliography}{10}
\providecommand{\url}[1]{#1}
\csname url@samestyle\endcsname
\providecommand{\newblock}{\relax}
\providecommand{\bibinfo}[2]{#2}
\providecommand{\BIBentrySTDinterwordspacing}{\spaceskip=0pt\relax}
\providecommand{\BIBentryALTinterwordstretchfactor}{4}
\providecommand{\BIBentryALTinterwordspacing}{\spaceskip=\fontdimen2\font plus
\BIBentryALTinterwordstretchfactor\fontdimen3\font minus \fontdimen4\font\relax}
\providecommand{\BIBforeignlanguage}[2]{{%
\expandafter\ifx\csname l@#1\endcsname\relax
\typeout{** WARNING: IEEEtran.bst: No hyphenation pattern has been}%
\typeout{** loaded for the language `#1'. Using the pattern for}%
\typeout{** the default language instead.}%
\else
\language=\csname l@#1\endcsname
\fi
#2}}
\providecommand{\BIBdecl}{\relax}
\BIBdecl

\bibitem{becker_impact_2020}
H.~Becker, F.~Becker, R.~Abe, S.~Bekhor, P.~F. Belgiawan, J.~Compostella, and et~al., ``Impact of vehicle automation and electric propulsion on production costs for mobility services worldwide,'' \emph{Transportation Research Part A: Policy and Practice}, vol. 138, pp. 105--126, Aug. 2020.

\bibitem{lokhandwala_dynamic_2018}
M.~Lokhandwala and H.~Cai, ``Dynamic ride sharing using traditional taxis and shared autonomous taxis: {A} case study of {NYC},'' \emph{Transportation Research Part C: Emerging Technologies}, vol.~97, pp. 45--60, Dec. 2018.

\bibitem{powell_fleet_2005}
W.~B. Powell and H.~Topaloglu, ``Fleet {Management},'' in \emph{Applications of {Stochastic} {Programming}}.\hskip 1em plus 0.5em minus 0.4em\relax Society for Industrial and Applied Mathematics, Jan. 2005, pp. 185--215.

\bibitem{pillac_review_2013}
V.~Pillac, M.~Gendreau, C.~Guéret, and A.~L. Medaglia, ``A review of dynamic vehicle routing problems,'' \emph{European Journal of Operational Research}, vol. 225, no.~1, pp. 1--11, Feb. 2013.

\bibitem{dantzig_truck_1959}
G.~B. Dantzig and J.~H. Ramser, ``The {Truck} {Dispatching} {Problem},'' \emph{Management Science}, vol.~6, no.~1, pp. 80--91, Oct. 1959.

\bibitem{braysy_vehicle_2005}
O.~Bräysy and M.~Gendreau, ``Vehicle {Routing} {Problem} with {Time} {Windows}, {Part} {I}: {Route} {Construction} and {Local} {Search} {Algorithms},'' \emph{Transportation Science}, vol.~39, no.~1, pp. 104--118, Feb. 2005.

\bibitem{berbeglia_dynamic_2010}
G.~Berbeglia, J.-F. Cordeau, and G.~Laporte, ``Dynamic pickup and delivery problems,'' \emph{European Journal of Operational Research}, vol. 202, no.~1, pp. 8--15, Apr. 2010.

\bibitem{cordeau_dial--ride_2007}
J.-F. Cordeau and G.~Laporte, ``The dial-a-ride problem: models and algorithms,'' \emph{Annals of Operations Research}, vol. 153, no.~1, pp. 29--46, Sep. 2007.

\bibitem{sayarshad_scalable_2015}
H.~R. Sayarshad and J.~Y. Chow, ``A scalable non-myopic dynamic dial-a-ride and pricing problem,'' \emph{Transportation Research Part B: Methodological}, vol.~81, pp. 539--554, 2015.

\bibitem{santos_taxi_2015}
D.~O. Santos and E.~C. Xavier, ``Taxi and {Ride} {Sharing}: {A} {Dynamic} {Dial}-a-{Ride} {Problem} with {Money} as an {Incentive},'' \emph{Expert Systems with Applications}, vol.~42, no.~19, pp. 6728--6737, Nov. 2015.

\bibitem{bent_waiting_2007}
R.~Bent and P.~Van~Hentenryck, ``Waiting and {Relocation} {Strategies} in {Online} {Stochastic} {Vehicle} {Routing},'' in \emph{Proceedings of the 20th International Joint Conference on Artifical Intelligence}, ser. IJCAI'07, 2007, p. 1816–1821.

\bibitem{ma_dynamic_2019}
T.-Y. Ma, S.~Rasulkhani, J.~Y.~J. Chow, and S.~Klein, ``A dynamic ridesharing dispatch and idle vehicle repositioning strategy with integrated transit transfers,'' \emph{Transportation Research Part E: Logistics and Transportation Review}, vol. 128, pp. 417--442, Aug. 2019.

\bibitem{zhang_control_2016}
R.~Zhang and M.~Pavone, ``Control of robotic mobility-on-demand systems: {A} queueing-theoretical perspective,'' \emph{The International Journal of Robotics Research}, vol.~35, pp. 186--203, Jan. 2016.

\bibitem{iglesias_BCMP_2019}
R.~Iglesias, F.~Rossi, R.~Zhang, and M.~Pavone, ``A {BCMP} network approach to modeling and controlling autonomous mobility-on-demand systems,'' \emph{The International Journal of Robotics Research}, vol.~38, no. 2-3, pp. 357--374, 2019.

\bibitem{fagnant_operations_2016}
D.~J. Fagnant, K.~M. Kockelman, and P.~Bansal, ``Operations of {Shared} {Autonomous} {Vehicle} {Fleet} for {Austin}, {Texas}, {Market},'' \emph{Transportation Research Record}, vol. 2563, no.~1, pp. 98--106, Jan. 2016.

\bibitem{zhang_model_2016}
R.~Zhang, F.~Rossi, and M.~Pavone, ``Model predictive control of autonomous mobility-on-demand systems,'' in \emph{2016 {IEEE} {International} {Conference} on {Robotics} and {Automation} ({ICRA})}, May 2016, pp. 1382--1389.

\bibitem{salazar_congestion-aware_2019}
M.~Salazar, M.~Tsao, I.~Aguiar, M.~Schiffer, and M.~Pavone, ``A {Congestion}-aware {Routing} {Scheme} for {Autonomous} {Mobility}-on-{Demand} {Systems},'' in \emph{2019 18th {European} {Control} {Conference} ({ECC})}, Jun. 2019, pp. 3040--3046.

\bibitem{sayarshad_non-myopic_2017}
H.~R. Sayarshad and J.~Y.~J. Chow, ``Non-myopic relocation of idle mobility-on-demand vehicles as a dynamic location-allocation-queueing problem,'' \emph{Transportation Research Part E: Logistics and Transportation Review}, vol. 106, pp. 60--77, Oct. 2017.

\bibitem{chen_decentralised_2021}
L.~Chen, A.~H. Valadkhani, and M.~Ramezani, ``Decentralised cooperative cruising of autonomous ride-sourcing fleets,'' \emph{Transportation Research Part C: Emerging Technologies}, vol. 131, p. 103336, Oct. 2021.

\bibitem{duan_centralized_2020}
L.~Duan, Y.~Wei, J.~Zhang, and Y.~Xia, ``Centralized and decentralized autonomous dispatching strategy for dynamic autonomous taxi operation in hybrid request mode,'' \emph{Transportation Research Part C: Emerging Technologies}, vol. 111, pp. 397--420, Feb. 2020.

\bibitem{wollenstein-betech_joint_2020}
S.~Wollenstein-Betech, I.~C. Paschalidis, and C.~G. Cassandras, ``Joint {Pricing} and {Rebalancing} of {Autonomous} {Mobility}-on-{Demand} {Systems},'' in \emph{2020 59th {IEEE} {Conference} on {Decision} and {Control} ({CDC})}, Dec. 2020, pp. 2573--2578.

\bibitem{lei_path-based_2019}
C.~Lei, Z.~Jiang, and Y.~Ouyang, ``Path-based {Dynamic} {Pricing} for {Vehicle} {Allocation} in {Ridesharing} {Systems} with {Fully} {Compliant} {Drivers},'' \emph{Transportation Research Procedia}, vol.~38, pp. 77--97, Jan. 2019.

\bibitem{al-kanj_approximate_2020}
L.~Al-Kanj, J.~Nascimento, and W.~B. Powell, ``Approximate dynamic programming for planning a ride-hailing system using autonomous fleets of electric vehicles,'' \emph{European Journal of Operational Research}, vol. 284, no.~3, pp. 1088--1106, Aug. 2020.

\bibitem{turan_dynamic_2020}
B.~Turan, R.~Pedarsani, and M.~Alizadeh, ``Dynamic pricing and fleet management for electric autonomous mobility on demand systems,'' \emph{Transportation Research Part C: Emerging Technologies}, vol. 121, p. 102829, Dec. 2020.

\bibitem{braverman_empty-car_2019}
A.~Braverman, J.~G. Dai, X.~Liu, and L.~Ying, ``Empty-{Car} {Routing} in {Ridesharing} {Systems},'' \emph{Operations Research}, vol.~67, no.~5, pp. 1437--1452, Sep. 2019.

\bibitem{ozkan_dynamic_2020}
E.~Özkan and A.~R. Ward, ``Dynamic {Matching} for {Real}-{Time} {Ride} {Sharing},'' \emph{Stochastic Systems}, vol.~10, no.~1, pp. 29--70, Mar. 2020.

\bibitem{balseiro_dynamic_2021}
S.~R. Balseiro, D.~B. Brown, and C.~Chen, ``Dynamic {Pricing} of {Relocating} {Resources} in {Large} {Networks},'' \emph{Management Science}, vol.~67, no.~7, pp. 4075--4094, Jul. 2021.

\bibitem{chen_real-time_2023}
Q.~G. Chen, Y.~M. Lei, and S.~Jasin, ``Real-{Time} {Spatial}–{Intertemporal} {Pricing} and {Relocation} in a {Ride}-{Hailing} {Network}: {Near}-{Optimal} {Policies} and the {Value} of {Dynamic} {Pricing},'' \emph{Operations Research}, Apr. 2023.

\bibitem{uber}
\BIBentryALTinterwordspacing
Uber, ``How surge pricing works.'' [Online]. Available: \url{https://www.uber.com/us/en/drive/driver-app/how-surge-works}
\BIBentrySTDinterwordspacing

\bibitem{xu_generalized_2021}
Z.~Xu, Y.~Yin, X.~Chao, H.~Zhu, and J.~Ye, ``A generalized fluid model of ride-hailing systems,'' \emph{Transportation Research Part B: Methodological}, vol. 150, pp. 587--605, Aug. 2021.

\bibitem{yang_equilibrium_2011}
H.~Yang and T.~Yang, ``Equilibrium properties of taxi markets with search frictions,'' \emph{Transportation Research Part B: Methodological}, vol.~45, pp. 696--713, May 2011.

\bibitem{zha_economic_2016}
L.~Zha, Y.~Yin, and H.~Yang, ``Economic analysis of ride-sourcing markets,'' \emph{Transportation Research Part C: Emerging Technologies}, vol.~71, pp. 249--266, Oct. 2016.

\bibitem{nourinejad_ride-sourcing_2020}
M.~Nourinejad and M.~Ramezani, ``Ride-{Sourcing} modeling and pricing in non-equilibrium two-sided markets,'' \emph{Transportation Research Part B: Methodological}, vol. 132, pp. 340--357, Feb. 2020.

\bibitem{zhang_efficiency_2019}
K.~Zhang, H.~Chen, S.~Yao, L.~Xu, J.~Ge, X.~Liu, and M.~Nie, ``An {Efficiency} {Paradox} of {Uberization},'' \emph{Available at SSRN}, Oct. 2019.

\bibitem{arnott_taxi_1996}
R.~Arnott, ``Taxi travel should be subsidized,'' \emph{Journal of Urban Economics}, vol.~40, no.~3, pp. 316--333, 1996.

\bibitem{li_regulating_2019}
S.~Li, H.~Tavafoghi, K.~Poolla, and P.~Varaiya, ``Regulating {TNCs}: {Should} {Uber} and {Lyft} set their own rules?'' \emph{Transportation Research Part B: Methodological}, vol. 129, pp. 193--225, Nov. 2019.

\bibitem{ramezani_dynamic_2018}
M.~Ramezani and M.~Nourinejad, ``Dynamic modeling and control of taxi services in large-scale urban networks: {A} macroscopic approach,'' \emph{Transportation Research Part C: Emerging Technologies}, vol.~94, pp. 203--219, Sep. 2018.

\bibitem{tlc}
\BIBentryALTinterwordspacing
``{TLC} {Trip} {Record} {Data}.'' [Online]. Available: \url{https://www1.nyc.gov/site/tlc/about/tlc-trip-record-data.page}
\BIBentrySTDinterwordspacing

\bibitem{xu_optimal_2017}
Z.~Xu, Y.~Yin, and L.~Zha, ``Optimal parking provision for ride-sourcing services,'' \emph{Transportation Research Part B: Methodological}, vol. 105, pp. 559--578, Nov. 2017.

\end{thebibliography}

\end{document}